\newtheorem{thm}{Theorem} 
\newtheorem{lem}[thm]{Lemma}
\newtheorem{cor}[thm]{Corollary} 
\newtheorem{defn}[thm]{Definition}
\newtheorem{claim}[thm]{Claim}
\newtheorem{prop}[thm]{Proposition}
\newtheorem{ques}[thm]{Question}
\newtheorem{obs}[thm]{Observation}
\newenvironment{boldproof}[1][\proofname] {\par\pushQED{\qed}\normalfont\topsep6\p@\@plus6\p@\relax\trivlist\item[\hskip\labelsep\bfseries#1\@addpunct{.}]\ignorespaces}{\popQED\endtrivlist\@endpefalse}
\newcommand{\EE}{\mathcal{E}}
\newcommand{\h}{\mathcal{H}}
\newcommand{\R}{\mathcal{R}}
\newcommand{\LL}{\mathcal{L}}
\newcommand{\abs}[1]{\left\lvert{#1}\right\rvert}
\newcommand{\floor}[1]{\left\lfloor{#1}\right\rfloor}
\DeclareMathOperator{\ex}{ex}
\DeclareMathOperator{\thres}{th}
\begin{document}

\begin{comment}
TODO:
Prove no jump from o(n^2) to Theta(n^2)? Bipartite graphs?
Number of triangles 
What happens when o(n^2)? Conjecture for C_4...
(Generalization to hypergraph F? For linear threshold in particular?)
Theorem 7 example: blowup of a graph?
\end{comment}

%\begin{frontmatter}
\title{Uniformity thresholds for the asymptotic size of extremal Berge-$F$-free hypergraphs}
\author{D\'aniel Gr\'osz\thanks{Department of Mathematics, University of Pisa. e-mail: \mbox{groszdanielpub@gmail.com}} \and Abhishek Methuku\thanks{Department of Mathematics, Central European University, Budapest,
Hungary. e-mail: \mbox{abhishekmethuku@gmail.com}} \and Casey Tompkins\thanks{Alfr\'ed R\'enyi Institute of Mathematics, Hungarian Academy of
Sciences. e-mail: \mbox{ctompkins496@gmail.com}}}
\date{}
\maketitle
%	\author[Pisa]{D\'aniel Gr\'osz\thanksref{Groszemail}}
% \author[CEU]{Abhishek Methuku\thanksref{Methukuemail}}
% \author[Renyi]{Casey Tompkins\thanksref{Tompkinsemail}}
% \address[Pisa]{Department of Mathematics, University of Pisa}
% \address[CEU]{Department of Mathematics, Central European University, Budapest, Hungary}
% \address[Renyi]{Alfr\'ed R\'enyi Institute of Mathematics, Hungarian Academy of Sciences}
% \thanks[Groszemail]{E-mail: groszdanielpub@gmail.com}
% \thanks[Methukuemail]{E-mail: abhishekmethuku@gmail.com}
% \thanks[Tompkinsemail]{E-mail: ctompkins496@gmail.com}

\begin{abstract}
Let $F = (U,E)$ be a graph and $\h = (V,\EE)$ be a hypergraph. We say that $\h$ contains a Berge-$F$ if there exist injections $\psi:U\to V$ and $\varphi:E\to \EE$ such that for every $e=\{u,v\}\in E$, $\{\psi(u),\psi(v)\}\subset\varphi(e)$. Let $\ex_r(n,F)$ denote the maximum number of hyperedges in an $r$-uniform hypergraph on $n$ vertices which does not contain a Berge-$F$. 

For small enough $r$ and non-bipartite $F$, $\ex_r(n,F)=\Omega(n^2)$; we show that for sufficiently large $r$, $\ex_r(n,F)=o(n^2)$. Let $\thres(F) = \min\{r_0 :\ex_r(n,F) = o(n^2) \text{ for all } r \ge r_0 \}$. We show lower and upper bounds for $\thres(F)$, the uniformity threshold of $F$. In particular, we obtain that $\thres(\triangle) = 5$, improving a result of Gy\H ori \cite{Gyori_triangle}.

%We also study the analogous problem for linear hypergraphs, defining the extremal function and the threshold function with the additional requirement that the hypergraph must be linear. We show that in this latter case, the threshold is the chromatic number of $F$. %It's simpler this way
%Yes (as authors it is simple for us), but then it is unclear how exactly we define linear threshold etc, and there is a chance, it could be misunderstood. The original version is only slightly longer... Also, I want to highlight the precise statement as a main result, as it is simple enough and reader does not have to dig into the paper to understand it precisely. Also, after reading the second paragraph they see what are the differences in the definitions.
We also study the analogous problem for linear hypergraphs. Let $\ex^L_r(n,F)$ denote the maximum number of hyperedges in an $r$-uniform linear hypergraph on $n$ vertices which does not contain a Berge-$F$, and let the linear unformity threshold $\thres^L(F) = \min\{r_0 :\ex^L_r(n,F) = o(n^2) \text{ for all } r \ge r_0 \}$. We show that $\thres^L(F)$ is equal to the chromatic number of $F$. 
\end{abstract}
%	\begin{keyword}
%	Uniform hypergraphs, Berge-subgraphs, graph removal lemma
%	\end{keyword}
%\end{frontmatter}

\section{Introduction and main results}

Let $F = (U,E)$ be a graph and $\h = (V,\EE)$ be a hypergraph. Generalizing the earlier definitions of Berge-path and Berge-cycle, Gerbner and Palmer \cite{Gerbner_Palmer} introduced the notion of Berge-$F$ hypergraphs.
\begin{defn}\label{Berge}
We say that $\h$ is a Berge-$F$ if there exist bijections $\psi:U\to V$ and $\varphi:E\to \EE$ such that for every $e=\{u,v\}\in E$, $\{\psi(u),\psi(v)\}\subset\varphi(e)$.
\end{defn}
Given a hypergraph $\h$, we denote by $\Gamma(\h)$ the $2$-shadow of $\h$, that is, the graph on the same vertex set, containing all $2$-element subsets of hyperedges from $\h$ as edges. Observe that $\h$ contains a Berge-$F$ as a subgraph if and only if $\Gamma(\h)$ contains a copy of $F$ such that $\h$ has a distinct hyperedge containing each edge of this copy of $F$.

For a graph $F$, let $\ex_r(n,F)$ denote the maximum number of hyperedges in an $r$-uniform hypergraph on $n$ vertices which does not contain a Berge-$F$ as a subgraph. The case when $r=2$ is the classical Tur\'an function $\ex(n,F)$. We will also consider what happens if we impose the additional assumption that the hypergraph is linear (i.e., any two hyperedges intersect in at most one element). We denote the maximum number of hyperedges in a linear $r$-uniform hypergraph on $n$ vertices which does not contain a Berge-$F$ by $\ex^L_r(n,F)$.

Gy\H ori, Katona and Lemons \cite{GyKaLe} generalized the Erd\H os--Gallai theorem to Berge-paths. Gy\H ori and Lemons \cite{Gyori_Lemons} gave upper bounds, and in some cases constructions, for $\ex_3(n,C_{2k+1})$. Gerbner and Palmer \cite{Gerbner_Palmer} gave bounds on $\ex_r(n,F)$ for $K_{s,t}$ and specifically $C_4$. It follows from Gy\H ori's results in~\cite{Gyori_triangle} that $\ex_r(n,\triangle)\le \frac{n^2}{8(r-2)}$ if $n$ is large enough. For $r=3,4$ this result is asymptotically sharp. We studied this problem in higher uniformities, and determined that, in fact, $\ex_r(n,\triangle) = o(n^2)$ when $r\ge5$, improving Gy\H ori's result. This will be obtained as a special case of more general theorems presented later.  
%This will be a special case of more general results
%In fact, in this paper we obtain more general results. 

The following result can be proved easily.
\begin{prop}[Gerbner and Palmer \cite{Gerbner_Palmer}]
\label{easybound}
For any graph $F$ and $r \ge \abs{V(F)}$, we have
\begin{displaymath}
\ex_r(n,F) \le \ex(n,F) = O(n^2).
\end{displaymath}
\end{prop}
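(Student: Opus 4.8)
The plan is to injectively assign to every hyperedge a pair of its vertices, producing a graph on $V$ with exactly $\abs{\EE}$ edges, and then to observe that this graph must be $F$-free. Concretely, suppose we have an injective map $f\colon\EE\to\binom{V}{2}$ with $f(A)\subseteq A$ for every $A\in\EE$, and let $G$ be the graph whose edge set is the image of $f$. Since $f$ is injective, $e(G)=\abs{\EE}$, and every edge of $G$ lies in the $2$-shadow $\Gamma(\h)$. If $G$ contained a copy of $F$, then, as the edges of this copy are the images $f(A_1),\dots,f(A_{\abs{E}})$ of pairwise distinct hyperedges $A_i$, each of which contains the corresponding edge, this copy together with the $A_i$ would be a Berge-$F$, contradicting our assumption. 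Hence $G$ is $F$-free and $\abs{\EE}=e(G)\le\ex(n,F)\le\binom{n}{2}$, which gives the claim.

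It remains to produce such an injection $f$, which I would build greedily: order the hyperedges arbitrarily and, when processing $A$, let $f(A)$ be any pair inside $A$ not already chosen for an earlier hyperedge. The procedure can only fail if at some step all $\binom{r}{2}$ pairs contained in the current hyperedge $A$ have already been used. This is exactly the step where the hypothesis $r\ge\abs{V(F)}$ enters: pick any $\abs{V(F)}$ vertices of $A$; all pairs among them are edges of $\Gamma(\h)$ and have already been assigned, so they span a clique $K_{\abs{V(F)}}\supseteq F$. Choosing a copy of $F$ inside this clique, each of its edges equals $f(A')$ for some previously processed hyperedge $A'$, and distinct edges correspond to distinct hyperedges because $f$ is injective; this again yields a Berge-$F$, a contradiction. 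Therefore the greedy process never gets stuck and $f$ is defined on all of $\EE$.

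The one point that requires care, and the main obstacle, is precisely the existence of the injection $f$: a naive appeal to Hall's theorem fails, since a cluster of many hyperedges confined to few vertices (for instance all $r$-subsets of a slightly larger vertex set) spans fewer pairs than there are hyperedges, so the global Hall condition can be violated. The resolution is that any such violation is itself a source of a Berge-$F$, which is exactly what the greedy argument above exploits, and it is here that the bound $r\ge\abs{V(F)}$ is genuinely used. Everything else is routine.
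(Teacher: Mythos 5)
Your proposal is correct and is essentially the paper's own proof: the paper also greedily assigns to each hyperedge a not-yet-taken pair inside it, argues that getting stuck would produce a clique on the hyperedge's vertices (hence a copy of $F$, since $r\ge\abs{V(F)}$) whose edges are covered by distinct earlier hyperedges, i.e.\ a Berge-$F$, and concludes that the resulting graph is $F$-free with $\abs{\EE}$ edges. Your write-up merely spells out these steps (the injectivity of the assignment and why the stuck case yields a Berge-$F$) in more detail than the paper's terse version.
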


We include its proof in Section~\ref{ramseyboundproof}. By the Erd\H os--Stone theorem, for any bipartite graph $F$, we have $\ex(n,F) = o(n^2)$. Moreover, in the graph case, if $F$ is not bipartite, then we have $\ex(n,F) = \Omega(n^2)$. We will show that for any graph $F$ and for any sufficiently large $r$, we have $\ex_r(n,F) = o(n^2)$. We introduce the following threshold functions.

\begin{defn}
\label{thres}
Let $F$ be a graph. We define the uniformity threshold of $F$ as
\begin{displaymath}
\thres(F) = \min\{r_0\ge 2 :\ex_r(n,F) = o(n^2) \text{ for all } r \ge r_0 \}.
\end{displaymath}
We define the linear uniformity threshold of $F$ as
\begin{displaymath}
\thres^L(F) = \min\{r_0\ge 2 :\ex^L_r(n,F) = o(n^2) \text{ for all } r \ge r_0 \}.
\end{displaymath}
\end{defn}
Our first theorem gives an upper bound for the value of $\thres(F)$ for any graph $F$. (Note that if $F$ is bipartite, then $\thres(F)\le\abs{V(F)}$ by Proposition~\ref{easybound}.) The Ramsey number of two graphs $R(G,H)$ is defined to be the smallest $n$ such that every $2$-coloring of the edges of the complete graph $K_n$ contains a copy of $G$ in the first color or $H$ in the second color. For every $G$ and $H$ this number is known to be finite by Ramsey's theorem.

For a graph $F$ containing an edge $e$, let $F\setminus e$ denote the graph formed by deleting $e$ from $F$.
\begin{thm}
\label{ramseybound}
For any graph $F$ (with at least two edges), and any of its edges $e$, we have
\begin{displaymath}
\thres(F) \le R(F,F\setminus e).
\end{displaymath}
\end{thm}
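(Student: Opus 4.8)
The plan is to fix $f:=\abs{E(F)}$ and $R:=R(F,F\setminus e)$, take any $r\ge R$, and prove that an $r$-uniform Berge-$F$-free hypergraph $\h$ on $n$ vertices has $o(n^2)$ hyperedges. I would argue by contradiction: assume $\abs{\EE}\ge\varepsilon n^2$ for some fixed $\varepsilon>0$ and arbitrarily large $n$, and try to produce a Berge-$F$. For a pair $p\in\Gamma(\h)$ write $d(p)$ for its codegree, the number of hyperedges of $\h$ containing $p$. The first (purely graph-theoretic) step is a system-of-distinct-representatives argument: if $\Gamma(\h)$ contains a copy of $F$ all of whose $f$ edges have codegree at least $f$, then for the $f$ sets of covering hyperedges Hall's condition holds trivially, since the union of any nonempty subcollection already has size at least $f$; hence one can pick distinct hyperedges covering the edges of the copy, i.e.\ a Berge-$F$. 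So the ``heavy'' shadow $\Gamma_{\ge f}:=\{p:\,d(p)\ge f\}$ must be $F$-free.

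Next comes the step that actually uses the hypothesis $r\ge R(F,F\setminus e)$. I would fix a hyperedge $h$ and $2$-colour the $\binom r2$ pairs inside it, calling a pair heavy if $d(p)\ge f$ and light otherwise. By the previous paragraph the heavy pairs inside $h$ span an $F$-free graph, so there is no heavy copy of $F$ on the $r\ge R$ vertices of $h$; by the definition of $R=R(F,F\setminus e)$ there must then be a light copy of $F\setminus e$ inside $h$. Since all $\abs{V(F)}$ vertices of this copy lie in $h$, the single missing edge $e$ is also a pair inside $h$, and hence is covered by $h$ itself. Thus \emph{every} hyperedge of $\h$ carries a copy of $F\setminus e$ built entirely from light ($d<f$) pairs, whose completing edge is covered by that same hyperedge.

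The hard part will be turning this local abundance into a Berge-$F$ under the assumption $\abs{\EE}\ge\varepsilon n^2$. A crude charging already gives $O(n^2)$: assigning each hyperedge to the $f-1$ light pairs of its forced copy of $F\setminus e$, and using that a light pair lies in at most $f-1$ hyperedges, yields $(f-1)\abs{\EE}\le(f-1)\binom n2$. Sharpening this to the strict $o(n^2)$ is the main obstacle, since the only way the SDR argument of the first step can fail is when a copy of $F$ in $\Gamma(\h)$ stays concentrated in fewer than $f$ hyperedges. My plan to break the $O(n^2)$ barrier is a supersaturation/removal-type count: with $\varepsilon n^2$ hyperedges one should obtain super-linearly many forced light copies of $F\setminus e$, and then show that their completing edges cannot all be trapped inside single hyperedges, so that some copy of $F$ admits $f$ genuinely distinct covering hyperedges. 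Ruling out this degenerate ``concentration'' scenario — invisible to the constant-order counting above — is precisely where the bulk of the work, and the quantitative gain from $O(n^2)$ to $o(n^2)$, must come from.
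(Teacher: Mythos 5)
Your first two steps are correct, and they are close in spirit to the paper's key lemma: the Hall/SDR observation that the graph of pairs with codegree at least $f$ is $F$-free, and the Ramsey application inside a single hyperedge, are exactly the selection devices the paper uses. But the proof as proposed has a genuine gap, and your own charging computation locates it precisely: a two-coloring by codegree (heavy/light) can never yield $o(n^2)$, because the light pairs may themselves number $\Theta(n^2)$. The extreme case makes this vivid: in a \emph{linear} Berge-$F$-free hypergraph every pair has codegree at most $1$, so every pair is light, the heavy shadow is empty, and the copy of $F\setminus e$ forced inside each hyperedge $h$ has \emph{all} of its covering hyperedges equal to $h$ --- the ``concentration'' scenario is not a degenerate exception but the typical situation. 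Ruling it out for linear hypergraphs is essentially the full strength of the paper's Theorem~\ref{exact} (whose proof itself needs the removal lemma plus a blowup argument), so the step you defer to a ``supersaturation/removal-type count'' is not a finishing touch; it is the entire quantitative content of the theorem, and it is missing.

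The paper closes this gap by running your Ramsey argument relative to a \emph{sparse} coloring rather than the codegree coloring. Since $\h$ is Berge-$F$-free, every copy of $F$ in $\Gamma(\h)$ must have two of its edges inside one hyperedge (otherwise choosing any covering hyperedge per edge gives distinct ones, i.e.\ a Berge-$F$); combined with $\abs{E(\h)}=O(n^2)$ from Proposition~\ref{easybound}, this shows $\Gamma(\h)$ contains only $O(n^{\abs{V(F)}-1})=o(n^{\abs{V(F)}})$ copies of $F$ (Claim~\ref{numberbound}). The graph removal lemma then produces a set $\R$ of $o(n^2)$ edges meeting every copy of $F$ in $\Gamma(\h)$. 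Now color the pairs inside a hyperedge $h$ by membership in $\R$: since $\Gamma(\{h\})\setminus\R$ is $F$-free by the definition of $\R$, Ramsey gives a copy of $F\setminus e$ inside $h$ all of whose edges lie in $\R$; if all these edges had codegree at least $\abs{E(F)}$, your greedy/Hall selection (with $h$ itself representing the completing edge $\hat e$) would produce a Berge-$F$. Hence every hyperedge contains an edge of $\R$ of codegree at most $\abs{E(F)}-1$, and therefore $\abs{E(\h)}\le(\abs{E(F)}-1)\abs{\R}=o(n^2)$. In short, your heavy/light dichotomy should be applied only to the edges of the sparse removal set $\R$, not to all pairs; producing that sparse set via Claim~\ref{numberbound} and the removal lemma is the idea your proposal lacks.
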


Our next result is a construction giving a lower bound on $\thres(F)$. 
\begin{thm}
\label{construction}
Let $F$ be a graph with clique number $\omega(F)\ge 2$. For any $2\le r\le(\omega(F)-1)^2$, there exists an $r$-uniform, Berge-$F$-free hypergraph on $n$ vertices with $\Omega(n^2)$ hyperedges. Therefore we have
\begin{displaymath}
\thres(F) \ge (\omega(F)-1)^2+1.
\end{displaymath}
\end{thm}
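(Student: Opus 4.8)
The plan is to reduce the statement to avoiding a Berge-clique and then to give an explicit product-type construction. Write $\omega=\omega(F)$ and $t=\omega-1$; we may assume $t\ge 2$, since for $\omega=2$ the range $2\le r\le(\omega-1)^2=1$ is empty. The crucial first observation is that it suffices to build a \emph{Berge-$K_\omega$-free} hypergraph. Indeed, since $K_\omega\subseteq F$, any Berge-$F$ contains, on the $\omega$ vertices realizing a largest clique of $F$ together with the $\binom{\omega}{2}$ distinct hyperedges assigned to the edges of that clique, a Berge-$K_\omega$; hence Berge-$K_\omega$-free implies Berge-$F$-free. So it is enough to construct, for each $2\le r\le t^2$, an $r$-uniform Berge-$K_\omega$-free hypergraph with $\Omega(n^2)$ hyperedges. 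Note that the target $(\omega-1)^2=t^2$ depends only on $\omega$, which is consistent with this reduction.

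I would first treat the main case $r=t^2$. Partition a ground set $N$ of size $tm$ (with $m=\Theta(n)$ a prime, e.g. $m\in(\tfrac{n}{2t^2},\tfrac{n}{t^2}]$ by Bertrand's postulate) into classes $N_1,\dots,N_t$, identifying $N_i$ with $\{i\}\times\mathbb{Z}_m$, and set the vertex set $V=N\times[t]$, so $|V|=t^2m=\Theta(n)$. For $(a,b)\in\mathbb{Z}_m^2$ let $S_{a,b}=\{(i,\,a+(i-1)b):i\in[t]\}$ be a transversal ``arithmetic progression'' block, and define the hyperedge $e_{a,b}=S_{a,b}\times[t]$, a set of exactly $t^2$ vertices. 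The family $\s=\{S_{a,b}\}$ is linear: two blocks agree in class $i$ iff $(a-a')+(i-1)(b-b')\equiv 0\pmod m$, which for $(a,b)\ne(a',b')$ has at most one solution $i$ (here I use $m$ prime and $m>t$), so any two blocks share at most one element of $N$. Since $S_{a,b}$ determines $a$ (from class $1$) and $b$ (from class $2$, using $t\ge2$), the $m^2=\Omega(n^2)$ hyperedges are distinct.

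The heart is Berge-$K_\omega$-freeness, which rests on two facts, both coming from linearity of $\s$ together with the blocks being transversals: (1) two vertices whose $N$-coordinates lie in the same class but are distinct are never shadow-adjacent, since a transversal block meets each class only once; and (2) two vertices with $N$-coordinates in different classes lie in a \emph{unique} common hyperedge, namely the single $e_{a,b}$ whose block contains both coordinates. Now suppose a Berge-$K_\omega$ existed, with core vertices $w_0,\dots,w_t$ and distinct hyperedges assigned to its $\binom{t+1}{2}$ pairs. By (1), any two core vertices sharing a class must have the same $N$-coordinate, so distinct core $N$-coordinates occupy distinct classes; as there are only $t$ classes, the $t+1$ core vertices fall into at most $t$ distinct $N$-coordinates. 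Pigeonhole then yields two core vertices $w_j,w_{j'}$ with the same $N$-coordinate (necessarily differing in their $[t]$-coordinate), and—since a single $N$-coordinate accounts for at most $t$ vertices—at least one further core vertex $w_k$ with a different $N$-coordinate. By (2) the pairs $\{w_j,w_k\}$ and $\{w_{j'},w_k\}$ induce the same pair of $N$-coordinates and so are forced into the same unique hyperedge, contradicting the injectivity of the hyperedge assignment on these two distinct edges of $K_\omega$. Hence no Berge-$K_\omega$ exists.

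Finally, for a general $2\le r\le t^2$ I would replace each $e_{a,b}$ by an $r$-element subset chosen to contain at least one vertex from class $1$ and one from class $2$; then $(a,b)$ is still recoverable, so the hyperedges remain distinct and $\Omega(n^2)$ in number, and since passing to subsets of hyperedges cannot create a Berge subgraph that was previously absent, the shrunken hypergraph stays Berge-$K_\omega$-free. Combined with the reduction, this gives $\ex_r(n,F)=\Omega(n^2)$ for all $2\le r\le(\omega-1)^2$, whence $\thres(F)\ge(\omega-1)^2+1$. I expect the main obstacle to be isolating properties (1)–(2) and packaging them into the pigeonhole step: one must design the block family so that co-occurring coordinates are spread across the $t$ classes (capping the clique size at $t$) while remaining linear (so that a repeated pair of classes forces a repeated hyperedge). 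The arithmetic-progression transversal design achieves both at once, and checking that it still produces $\Omega(n^2)$ blocks is the real content.
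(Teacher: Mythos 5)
Your proposal is correct and follows essentially the same route as the paper's proof: reduce to Berge-$K_{\omega}$-freeness (since $K_{\omega}\subseteq F$), blow up each vertex of a linear $(\omega-1)$-uniform arithmetic-progression transversal design by a factor of $\omega-1$, and rule out a Berge-clique by the same pigeonhole-plus-linearity argument (two core vertices sharing an origin, a third with a different origin, forcing a repeated hyperedge). The only cosmetic differences are that you work over $\mathbb{Z}_m$ with $m$ prime where the paper restricts index ranges to avoid wraparound, and that for $2\le r<(\omega-1)^2$ you truncate hyperedges to $r$-subsets while the paper varies the blow-up factors; both variants are valid.
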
 

The above two theorems imply $\thres(\triangle)=5$.

Erd\H os, Frankl and R\"odl \cite{Erdos_Frankl_Rodl} constructed a linear $r$-uniform Berge-triangle-free hypergraph with more than $n^{2-\varepsilon}$ hyperedges for any $r\ge 3$ and $\varepsilon>0$. This implies that when $F=\triangle$, in our definition of the functions $\thres$ and $\thres^L$, $o(n^2)$ cannot be replaced by a function of $n$ with smaller exponent.

Finally, we consider linear hypergraphs. (In Section~\ref{constructionproof} we prove Theorem~\ref{construction} by blowing up a linear, Berge-$F$-free hypergraph.) It is easy to see that a linear hypergraph on $n$ vertices has at most $\binom{n}{2}$ hyperedges: fix a pair of vertices in each hyperedge; by the definition of a linear hypergraph, all these pairs must be distinct. Timmons \cite{Timmons} showed that (with our notation) $\thres^L(F) \le \abs{V(F)}$. We prove the following exact result.

\begin{thm}
\label{exact}
For any (non-empty) graph $F$, we have
\begin{displaymath}
\thres^L(F) = \chi(F),
\end{displaymath}
and for any $2\le r<\chi(F)$, there exists an $r$-uniform, linear, Berge-$F$-free hypergraph on $n$ vertices with $\Omega(n^2)$ hyperedges.
\end{thm}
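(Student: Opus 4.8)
The plan is to prove the two inequalities $\thres^L(F)\le\chi(F)$ and $\thres^L(F)\ge\chi(F)$ separately; write $k=\chi(F)$ throughout. The inequality $\thres^L(F)\ge k$ is witnessed by the explicit construction the statement demands, which I would give as follows. Fix $2\le r\le k-1$ and a prime $p>r$, let $V=\mathbb{Z}_p\times[r]$ (so $n=pr$ and the sets $P_i=\mathbb{Z}_p\times\{i\}$ form an $r$-partition of $V$), and for each pair $(a,b)\in\mathbb{Z}_p^2$ take the hyperedge $B_{a,b}=\{(a+(i-1)b,\,i):i\in[r]\}$, a transversal of the parts, i.e. a copy of $K_r$ in the complete $r$-partite host graph. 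There are $p^2=\Theta(n^2)$ such hyperedges. Two of them agreeing in two parts $i\ne j$ would force $(i-j)b\equiv(i-j)b'\pmod p$, hence $b=b'$ and $a=a'$; thus distinct hyperedges meet in at most one vertex and $\h$ is linear. Its shadow $\Gamma(\h)$ lies in the complete $r$-partite graph, so $\chi(\Gamma(\h))\le r<k=\chi(F)$ and $\Gamma(\h)$ contains no copy of $F$ whatsoever, so $\h$ is certainly Berge-$F$-free. This gives $\ex^L_r(n,F)=\Omega(n^2)$ for infinitely many $n$ (pad with isolated vertices for the remaining $n$), which is the second assertion and forces $\thres^L(F)\ge k$.

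For the upper bound I must show $\ex^L_r(n,F)=o(n^2)$ whenever $r\ge k$. Suppose instead that a linear $r$-uniform Berge-$F$-free hypergraph $\h$ has $N\ge\varepsilon n^2$ hyperedges. By linearity the shadow $\Gamma(\h)$ is a union of $N$ pairwise edge-disjoint copies of $K_r$, and a copy of $F$ in $\Gamma(\h)$ extends to a Berge-$F$ precisely when its edges lie in $\abs{E(F)}$ distinct hyperedges; hence the absence of a Berge-$F$ means \emph{every} copy of $F$ in $\Gamma(\h)$ has two of its edges inside one common hyperedge. Fixing which two edges coincide and which hyperedge contains them accommodates the three or four vertices they span inside an $r$-set and leaves the remaining vertices free, so the number of such copies is at most a constant times $N\cdot n^{\abs{V(F)}-3}=o(n^{\abs{V(F)}})$, using $N\le n^2$. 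The graph removal lemma then yields a set of $o(n^2)$ edges whose deletion makes the shadow genuinely $F$-free; call the result $G'$. Since only $o(n^2)$ edges were deleted while there are $N=\Omega(n^2)$ edge-disjoint hyperedges, all but $o(n^2)$ of them survive intact, so $G'$ is $F$-free and still contains $\ge\tfrac\varepsilon2 n^2$ edge-disjoint copies of $K_r\supseteq K_k$.

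The crux is to show this is impossible, and this is exactly the step that upgrades Timmons' bound $\abs{V(F)}$ from \cite{Timmons} to $\chi(F)$: a single intact block is only a $K_r$, which need not contain $F$ when $r<\abs{V(F)}$, so I cannot argue block by block. Instead I would exploit that $G'$ carries $\ge\tfrac\varepsilon2 n^2$ edge-disjoint copies of $K_k$ — a feature the Tur\'an graph (the densest $F$-free graph) entirely lacks, since it has no $K_k$ at all. If $G'$ had at least $\delta n^{k}$ copies of $K_k$, supersaturation of complete multipartite graphs would give a $K_k(t,\dots,t)$ with $t\ge\abs{V(F)}$ for large $n$, hence a copy of $F\subseteq K_k(t,\dots,t)$, contradicting $F$-freeness. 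Otherwise $G'$ has fewer than $\delta n^{k}$ copies of $K_k$, and the $K_k$-removal lemma (a consequence of Szemer\'edi's regularity lemma, and for $k=3$ precisely the Ruzsa--Szemer\'edi theorem) destroys all copies of $K_k$ by deleting fewer than $\tfrac\varepsilon2 n^2$ edges; but the $\ge\tfrac\varepsilon2 n^2$ edge-disjoint $K_k$'s each require a distinct deleted edge, a contradiction. Choosing $\delta$ from the removal lemma applied with tolerance $\tfrac\varepsilon4$ makes both horns fire, so no such $\h$ exists and $\ex^L_r(n,F)=o(n^2)$.

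The main obstacle is this final impossibility claim, which is the conceptual heart of the theorem. Edge density alone is useless, since $F$-free graphs may carry $\Theta(n^2)$ edges; the entire point is to convert the clique structure forced by linearity into $\Omega(n^2)$ edge-disjoint copies of $K_k$, which is incompatible with $F$-freeness exactly when $r\ge\chi(F)$. The two regularity-type inputs — the removal lemma for $F$ to cleanse the shadow, and the $K_k$-removal lemma together with multipartite supersaturation to close the argument — are the technically heaviest ingredients, and verifying that the intact blocks survive the first cleansing in sufficient number is the bookkeeping that must be done carefully; everything else is routine.
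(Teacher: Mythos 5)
Your proof is correct, and its second half takes a genuinely different route from the paper's. The lower-bound construction is essentially the paper's own (Section~\ref{linearconstruction}): both take arithmetic-progression transversals of an $r$-partition, yours over $\mathbb{Z}_p$ with $p$ prime (Bertrand's postulate makes the padding work for every $n$), the paper's with truncated parameter ranges instead of modular wraparound; linearity and Berge-$F$-freeness are argued identically. For the upper bound, the paper proves Lemma~\ref{blowup}: in a linear hypergraph, a large enough blowup of $K_{\chi(F)}$ in the $2$-shadow already forces a Berge-$F$ (an embedding count in which linearity makes collisions rare); combined with Proposition~\ref{Alon_Shikhelman} this shows \emph{unconditionally} that a Berge-$F$-free linear $\h$ has $o(n^{\chi(F)})$ shadow copies of $K_{\chi(F)}$, after which a single application of the removal lemma to $K_{\chi(F)}$ gives an $o(n^2)$-edge hitting set $S$, every hyperedge contains an edge of $S$ (as $r\ge\chi(F)$), and linearity makes the assignment of hyperedges to covering edges injective, so $\abs{E(\h)}\le\abs{S}=o(n^2)$ with no contradiction argument at all. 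You instead convert the Berge condition into graph information differently: linearity gives each shadow edge a unique hyperedge, so Berge-$F$-freeness forces two edges of every shadow copy of $F$ into one hyperedge, making such copies few --- this is the linear analogue of Claim~\ref{numberbound}, which the paper uses for Theorem~\ref{ramseybound}, not here --- then you cleanse the shadow with the $F$-removal lemma and reduce to the statement that an $F$-free graph cannot carry $\Omega(n^2)$ edge-disjoint copies of $K_{\chi(F)}$, settled by your dichotomy (clique supersaturation in one horn, a second removal-lemma application in the other). The endgames coincide (few $K_{\chi(F)}$'s, removal, and linearity/edge-disjointness), but the mechanisms handling the Berge structure differ: the paper pays for its economy (one removal application, a direct count) with the more delicate embedding argument of Lemma~\ref{blowup}, while your collision count is lighter and your intermediate claim --- a Ruzsa--Szemer\'edi-type fact about edge-disjoint cliques in $F$-free graphs --- is a clean statement of independent interest, at the cost of an extra removal-lemma invocation and a case analysis. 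Both routes rest on the same two pillars (graph removal and clique supersaturation) and give the same tower-type quantitative dependence.
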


Note that $\chi(F)$ may be bigger than the lower bound in Theorem~\ref{construction}, and it obviously also bounds $\thres(F)$ from below. Generalizing the proof of Theorem~\ref{construction}, we prove the following common generalization of Theorem~\ref{construction} and the lower bound on $\thres(F)$ coming from Theorem~\ref{exact}.
%common generalization of what and what?
%Of Thm 6 (used as a lower bound of th(F)), and Thm 5. (See below Thm 7.)

% Rewriting to avoid definitions in the theorem; otherwise it looks way too complicated \begin{thm}\label{general}
% Let $F$ be a graph, and let $1\le t \le \abs{F}-1$. We define a $t$-admissible partition of $F$ as a partition of $V(F)$ into sets of size at most $t$, such that between any two sets there is at most one edge in $F$. We consider the graphs obtained by contracting each set in some $t$-admissible partition to a point; let $c$ be the minimum of the chromatic numbers of all such graphs obtained from $t$-admissible partitions. If $c\ge 3$, then $\thres(F) \geq (c-1)t+1$.
% \end{thm}

For a graph $F$, we define a $t$-admissible partition of $F$ as a partition of $V(F)$ into sets of size at most $t$, such that between any two sets there is at most one edge in $F$. `Contracting' a set $S$ of vertices in a graph produces a new graph in which all the vertices of $S$ are replaced with a single vertex $s$ such that $s$ is adjacent to all the vertices to which any of the vertices of $S$ was originally adjacent. 

\begin{thm}\label{general}
Let $F$ be a graph, and let $1\le t \le \abs{V(F)}-1$. Consider all the graphs obtained by contracting each set in some $t$-admissible partition of $F$ to a point, and let $c$ be the minimum of the chromatic numbers of all such graphs. If $c\ge 3$, then $\thres(F) \geq (c-1)t+1$.
\end{thm}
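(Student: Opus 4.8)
The plan is to prove the bound through a single explicit construction. By the definition of $\thres(F)$ it suffices to exhibit, for the one uniformity $r=(c-1)t$, an $r$-uniform Berge-$F$-free hypergraph on $n$ vertices with $\Omega(n^2)$ hyperedges: such an example shows that $r_0=(c-1)t$, and hence every smaller $r_0$, fails the defining condition, so $\thres(F)\ge (c-1)t+1$. Following the strategy used for Theorem~\ref{construction}, I would obtain this hypergraph $\h$ as a blow-up of a suitable linear hypergraph $\LL$: replace each vertex of $\LL$ by a block of $t$ new vertices, and each hyperedge $\{w_1,\dots,w_s\}$ of $\LL$ by the union of the $s$ corresponding blocks. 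If $\LL$ is $s$-uniform with $s=c-1$, then $\h$ is $(c-1)t$-uniform and has exactly as many hyperedges as $\LL$.

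The first step is to construct the right $\LL$: an $s$-uniform linear hypergraph on $N$ vertices with $\Omega(N^2)$ hyperedges that is $s$-partite, with every hyperedge a transversal of the parts. Concretely, I would take parts $V_1,\dots,V_s$, each a copy of $\mathbb{Z}_p$ for a large prime $p$, and for each pair $(a,b)\in\mathbb{Z}_p^2$ form the hyperedge $\{(i,\,a+(i-1)b):1\le i\le s\}$. There are $p^2=\Omega(N^2)$ of these, and two of them agreeing in two coordinates must coincide over the field, so $\LL$ is linear. The point of this structure is a colouring property: colouring each vertex of $\LL$ by the index of its part yields a colouring in which, for any Berge-subgraph of $\LL$, the two endpoints of each edge lie in a common hyperedge and hence, that hyperedge being a transversal, in distinct parts. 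Thus every Berge-subgraph of $\LL$ is properly $s$-colourable, so $\LL$ contains no Berge-$G$ for any graph $G$ with $\chi(G)\ge c=s+1$.

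It then remains to show $\h$ is Berge-$F$-free. Suppose it contained a Berge-$F$ given by injections $\psi\colon V(F)\to V(\h)$ and $\varphi\colon E(F)\to\EE(\h)$. Recording which block contains each $\psi(u)$ defines a map $\sigma\colon V(F)\to V(\LL)$ whose fibres form a partition $P$ of $V(F)$ into parts of size at most $t$, since each block has only $t$ vertices and $\psi$ is injective. I would next argue that $P$ is $t$-admissible: two distinct edges of $F$ between the same pair of fibres would yield two distinct hyperedges of $\h$, as $\varphi$ is injective, each lying inside the blow-up of an $\LL$-hyperedge through a common pair of vertices, which by linearity of $\LL$ is a single hyperedge — a contradiction. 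Contracting $P$ then produces a graph $G$ with $\chi(G)\ge c$, and the data of the Berge-$F$ descend to a Berge-$G$ in $\LL$: the fibres inject into $V(\LL)$, each edge of $G$ lifts to the unique $F$-edge between its two fibres, and the corresponding values of $\varphi$ arise from distinct hyperedges of $\LL$, each containing both endpoints. This contradicts the colouring property of $\LL$, completing the proof.

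The main obstacle is this middle bookkeeping step rather than the construction or the $\Omega(n^2)$ count. I expect the delicate part to be verifying simultaneously that the induced partition is $t$-admissible and that the edges of the contracted graph $G$ lift to genuinely distinct hyperedges of $\LL$; both rely on the same mechanism, namely the interplay between the linearity of $\LL$, which forces a unique hyperedge through any two vertices, and the injectivity of $\varphi$. It will be important to phrase the correspondence between hyperedges of $\h$ and of $\LL$ carefully, since the blow-up map is a bijection on hyperedge sets but collapses whole blocks of vertices. Finally, the colouring property must be stated for arbitrary Berge-subgraphs, not merely for $\LL$ itself, which is precisely why the transversal $s$-partite structure, rather than an arbitrary linear $s$-uniform hypergraph, is needed.
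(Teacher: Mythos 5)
Your proposal is correct and follows essentially the same route as the paper: blow up a $(c-1)$-uniform, $(c-1)$-partite linear hypergraph by a factor of $t$ (your $\mathbb{Z}_p$ construction is just an algebraic variant of the construction in Section~\ref{linearconstruction}), and show that any Berge-$F$ in the blow-up would induce a $t$-admissible partition of $F$ whose contraction descends to a Berge-$G$ in $\LL$ with $\chi(G)\ge c$, contradicting the transversal colouring property of $\LL$. The only inessential difference is that you treat just the single uniformity $r=(c-1)t$, which indeed suffices for the threshold bound, whereas the paper constructs examples for every $2\le r\le(c-1)t$ by varying the blow-up factors.
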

For $t=1$, the only $t$-admissible partition of a graph $F$ is putting every vertex into a different set, so $c=\chi(F)$, and we just get back the lower bound in Theorem~\ref{exact}. We also get Theorem~\ref{construction} as a special case of Theorem~\ref{general} when $t=\omega(F)-1$: In any $(\omega(F)-1)$-admissible partition of $F$, every vertex in a maximal clique of $F$ must belong to a different set of the partition. Indeed, no set of the partition may contain all vertices of an $\omega(F)$-clique.
%It's weird to use $K_{\omega(F)}$ to denote a specific $\omega(F)$-clique
But if a set $A$ from the partition contained two or more vertices of an $\omega(F)$-clique, and another set $B$ contained another vertex of that clique, then there would be two or more edges between $A$ and $B$, contradicting the definition of a $t$-admissible partition. This means that the graph we get after contracting all the sets of an $(\omega(F)-1)$-admissible partition contains an $\omega(F)$-clique, so its chromatic number is at least $\omega(F)$. Therefore $c \ge \omega(F)$.
% and so $\thres(F) \ge (\omega(F)-1)t+1 = (\omega(F)-1)^2+1$.
%After having read thm 7, the reader can figure this out

As an example where Theorem~\ref{general} gives an improvement, consider $F =  K_{2,1,1}$. Putting $t=3$ and $c=3$, we get $\thres(K_{2,1,1})\ge 7$. Indeed, the only 3-admissible partition of $K_{2,1,1}$ is to put every vertex of $K_{2,1,1}$ into a different set. Theorem~\ref{construction} gives a lower bound of just 5, while Theorem~\ref{exact} gives 3. We give further corollaries of Theorem~\ref{general} about blowups of graphs in Section~\ref{constructionproof}.

Until now we focused on uniformities $r$ for which $\ex_r(n,F)$ is subquadratic. In Section~\ref{superquadratic}, we discuss the behavior of $\ex_r(n,F)$ as $r$ grows, more generally. In particular, we discuss uniformities $r$ for which $\ex_r(n,F)$ is superquadratic using the relationship between $\ex_r(n,F)$ and the maximum number of $K_r$'s in an $F$-free graph.

\section{Behavior of \texorpdfstring{$\ex_r(n,F)$}{ex\_r(n,F)} as $r$ increases}\label{superquadratic}

Alon and Shikhelman \cite{Alon_S} studied the maximum number of copies of a graph $T$ in an $F$-free graph on $n$ vertices, denoted $\ex(n,T,F)$. For example, in the following proposition, we paraphrase Propositions 2.1 and 2.2 in \cite{Alon_S}.
\begin{prop}[Alon and Shikhelman \cite{Alon_S}]\label{Alon_Shikhelman}
Let $r\ge2$. Then $\ex(n,K_r,F)=\Omega(n^r)$ if and only if $r<\chi(F)$. Moreover, if $r<\chi(F)$, then $\ex(n,K_r,F)=(1+o(1))\binom{\chi(F)-1}{r}\bigl(\frac{n}{\chi(F)-1}\bigr)^r$, otherwise $\ex(n,K_r,F)\le n^{r-\epsilon(r,F)}$ for some $\epsilon(r,F)>0$.
%$\epsilon = \epsilon(r,F)$ I dislike this notation. \epsilon is either a real number, or a function.
\end{prop}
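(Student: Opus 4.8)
The plan is to treat the three assertions separately: the lower bound $\ex(n,K_r,F)=\Omega(n^r)$ for $r<\chi(F)$, the matching upper bound with the exact constant in that range, and the estimate $\ex(n,K_r,F)\le n^{r-\epsilon(r,F)}$ for $r\ge\chi(F)$. Note that the last bound also supplies the ``only if'' half of the dichotomy, since $n^{r-\epsilon}=o(n^r)$, so it is not $\Omega(n^r)$.

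For the lower bound I would take the Tur\'an graph $T(n,\chi(F)-1)$, the balanced complete $(\chi(F)-1)$-partite graph. It is $(\chi(F)-1)$-colorable, hence so is every subgraph of it; since $F$ needs $\chi(F)$ colors it cannot embed, so $T(n,\chi(F)-1)$ is $F$-free. Every $K_r$ in a complete multipartite graph uses $r$ distinct parts, so the number of copies is $\binom{\chi(F)-1}{r}$ times the product of the sizes of a chosen $r$-tuple of parts, which is $(1+o(1))\binom{\chi(F)-1}{r}(n/(\chi(F)-1))^r$. This requires $r\le\chi(F)-1$, i.e.\ exactly $r<\chi(F)$, and the quantity is $\Omega(n^r)$ there.

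For the upper bound when $r\ge\chi(F)$, the key idea is to encode cliques as a hypergraph. Form the $r$-uniform hypergraph $H$ on $V(G)$ whose edges are the vertex sets spanning a $K_r$ of $G$, so that $|E(H)|$ is the number of $K_r$'s in $G$. By Erd\H os's theorem on the Tur\'an number of the complete $r$-partite $r$-uniform hypergraph $K^{(r)}_{t,\dots,t}$, if $|E(H)|\ge Cn^{r-1/t^{r-1}}$ then $H$ contains a copy of $K^{(r)}_{t,\dots,t}$; taking $t=\abs{V(F)}$, this yields disjoint sets $V_1,\dots,V_r$ of size $t$ every transversal of which spans a $K_r$ of $G$. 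Hence all cross-pairs are edges, so $G$ contains the complete $r$-partite graph $K_r(t,\dots,t)$; since $\chi(F)\le r$ and each color class of $F$ has at most $t$ vertices, $F$ embeds, contradicting $F$-freeness. This gives $\ex(n,K_r,F)\le Cn^{r-\epsilon}$ with $\epsilon=1/\abs{V(F)}^{r-1}$.

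The hard part is the exact constant in the range $r<\chi(F)$. Writing $p=\chi(F)-1$, I would first apply the previous paragraph with $r=\chi(F)$ to conclude that an $F$-free graph $G$ has only $o(n^{p+1})$ copies of $K_{p+1}$. The $K_{p+1}$-removal lemma then lets me delete $o(n^2)$ edges from $G$ to obtain a $K_{p+1}$-free graph $G'$. By the Zykov--Erd\H os clique-counting Tur\'an theorem, $T(n,p)$ maximizes the number of $K_r$'s among all $K_{p+1}$-free graphs, so $G'$ has at most $(1+o(1))\binom{p}{r}(n/p)^r$ copies of $K_r$. Finally, each deleted edge lies in at most $n^{r-2}$ copies of $K_r$, so passing from $G'$ back to $G$ changes the count by at most $o(n^2)\cdot n^{r-2}=o(n^r)$; combined with the lower bound this pins down the asymptotics. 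The genuine obstacles are thus the two imported ingredients --- the removal lemma, used to replace $F$-freeness by $K_{p+1}$-freeness at negligible cost, and the exact Zykov--Erd\H os extremal count --- rather than any of the elementary counting.
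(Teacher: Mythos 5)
Your proof is correct, but note that the paper itself does not prove this proposition: it is imported verbatim (as a paraphrase of Propositions 2.1 and 2.2) from the cited Alon--Shikhelman paper, and the only argument the text supplies is the lower-bound construction --- a balanced complete $r$-partite graph, which yields $\Omega(n^r)$ for $r<\chi(F)$ but not the sharp constant. Your argument essentially reconstructs the proof from the cited source: the Tur\'an graph $T(n,\chi(F)-1)$ for the sharp lower bound, Erd\H os's theorem on Tur\'an numbers of the complete $r$-partite $r$-uniform hypergraph for the case $r\ge\chi(F)$ (which, as you observe, also settles the ``only if'' direction), and the clique removal lemma combined with the Zykov--Erd\H os theorem that $T(n,p)$ maximizes the number of $K_r$'s among $K_{p+1}$-free graphs for the matching upper bound when $r<\chi(F)$. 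The individual steps are all sound; in particular, the extraction of a complete $r$-partite graph $K_r(t,\dots,t)$ in $G$ from a copy of $K^{(r)}_{t,\dots,t}$ in the clique hypergraph works because every cross-pair extends to a transversal, and the final bookkeeping (each deleted edge lies in at most $n^{r-2}$ copies of $K_r$, so the deletion perturbs the count by only $o(n^r)$) is correct.
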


For the $r<\chi(F)$ case, a construction showing $\ex(n,K_r,F)=\Omega(n^r)$ is a complete $r$-partite graph on $n$ vertices with roughly $\left\lfloor\frac{n}{r}\right\rfloor$ vertices in each part. It has chromatic number $r$, so it does not contain $F$, and it contains $\Omega(n^r)$ copies of $K_r$.

Clearly $\ex(n,K_r,F)\le\ex_r(n,F)$: Take an $F$-free graph with $\ex(n,K_r,F)$ $r$-cliques, and replace each $r$-clique with a hyperedge containing the vertices of the clique. The resulting hypergraph cannot contain a Berge-$F$, as its 2-shadow does not even contain a copy of $F$. The converse is not true. If we take a Berge-$F$-free hypergraph with $\ex_r(n,F)$ hyperedges, and we replace its hyperedges with $r$-cliques (i.e., we take its 2-shadow), it might contain a copy of $F$. The upper bound in the following proposition, which relates $\ex_r(n,F)$ to $\ex(n,K_r,F)$, was discovered by Gerbner and Palmer \cite{G_P2}. As the proof is very simple, we include it for completeness.

\begin{prop}[Gerbner and Palmer \cite{G_P2}]\label{relation}
For any $r\ge3$, \[\ex(n,K_r,F)\le\ex_r(n,F)\le\ex(n,K_r,F)+\ex(n,F).\]
\end{prop}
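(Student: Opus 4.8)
The first inequality $\ex(n,K_r,F)\le\ex_r(n,F)$ is already justified in the paragraph preceding the statement (replace each $K_r$ of an extremal $F$-free graph by a hyperedge on its vertices), so I would concentrate entirely on the upper bound $\ex_r(n,F)\le\ex(n,K_r,F)+\ex(n,F)$. Let $\h=(V,\EE)$ be an $r$-uniform Berge-$F$-free hypergraph with $\ex_r(n,F)$ hyperedges and let $\Gamma=\Gamma(\h)$ be its $2$-shadow. Every hyperedge spans a copy of $K_r$ in $\Gamma$, and since hyperedges are distinct $r$-element sets, distinct hyperedges span $K_r$'s on distinct vertex sets.

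The plan is to reduce the whole statement to a single deletion claim: that one can remove a set $B\subseteq\EE$ of at most $\ex(n,F)$ hyperedges so that the shadow $\Gamma(\h\setminus B)$ of the remaining hypergraph is $F$-free. Granting this, each hyperedge of $\h\setminus B$ is a distinct copy of $K_r$ lying inside the $F$-free graph $\Gamma(\h\setminus B)$, so $\abs{\EE\setminus B}$ is at most the number of copies of $K_r$ in an $F$-free graph on $n$ vertices, i.e.\ at most $\ex(n,K_r,F)$. Adding back $\abs{B}\le\ex(n,F)$ then gives precisely the claimed bound, and the two terms separate cleanly: the clique count absorbs the hyperedges that survive, and the $\ex(n,F)$ term absorbs the deletions.

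To build $B$ I would exploit Berge-$F$-freeness rather than mere $F$-freeness of $\Gamma$. Whenever $\Gamma$ still contains a copy of $F$, its $\abs{E(F)}$ shadow-edges are each covered by at least one hyperedge, yet there is no system of distinct representatives for them — otherwise those distinct hyperedges would constitute a Berge-$F$. By the defect form of Hall's theorem there is a subset $S$ of these edges covered by fewer than $\abs{S}$ hyperedges; deleting that small family of hyperedges uncovers every edge of $S$ in the shadow and so destroys the copy of $F$. I would iterate this, processing the hyperedges in a fixed order while maintaining a growing auxiliary graph of ``representative'' pairs, charging each deleted hyperedge to a pair contained in it and insisting that the charged pairs always form an $F$-free graph.

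The main obstacle is exactly this bookkeeping that keeps $\abs{B}\le\ex(n,F)$: a naive iteration could revisit the same region or charge several deleted hyperedges to one pair, so the charging map into the auxiliary $F$-free graph must be made injective. I expect to enforce injectivity by maintaining a single $F$-free graph $G$ and classifying each hyperedge either as contributing its clique to $G$ (counted against $\ex(n,K_r,F)$) or as a deletion that receives a new representative pair of $G$, using the Hall deficiency above to guarantee that at each step such an addable pair exists and can be inserted while $G$ remains $F$-free. Verifying that this greedy never gets stuck — that Berge-$F$-freeness always supplies an addable representative — is the crux on which the clean second term $\ex(n,F)$ ultimately rests.
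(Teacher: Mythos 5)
Your reduction is sound, and your deletion claim is in fact true --- the paper's proof produces exactly such a set $B$. But the mechanism you propose for constructing $B$ (iterating over copies of $F$ in the current shadow and applying the defect form of Hall's theorem) has a genuine gap at precisely the point you flag as the crux, and it is not a gap that routine bookkeeping closes: the process as described can get stuck. When you delete the deficient family $N(S)$ of hyperedges covering an edge set $S$ of some $F$-copy, you must charge each deleted hyperedge injectively to a pair contained in it, but nothing guarantees an uncharged pair is available --- a hyperedge of $N(S)$ may have all $\binom{r}{2}$ of its pairs already charged in earlier iterations. Hall deficiency of $S$ does not rescue this even within a single iteration: two hyperedges of $N(S)$ can both meet $S$ in the same single edge, so there need not exist a system of distinct representatives for $N(S)$ among the edges of $S$, let alone among globally fresh pairs. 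And injectivity is not a side condition you can drop: it is exactly what makes the charged pairs form an $F$-free graph (a copy of $F$ among charged pairs would have distinct owner hyperedges, i.e.\ a Berge-$F$), which is where the bound $\abs{B}\le\ex(n,F)$ comes from.

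The missing idea is to drive the greedy by hyperedges rather than by $F$-copies, and to abandon the goal of destroying $F$-copies altogether. Process the hyperedges of $\h$ in any order; give each one an uncharged pair of its vertices if one exists, and otherwise declare it a ``clique'' hyperedge. The charged pairs form an $F$-free graph $G$ by the injectivity argument above, so there are at most $\ex(n,F)$ charged hyperedges; every clique hyperedge has all of its pairs inside $G$, so the clique hyperedges are distinct $r$-cliques of the $F$-free graph $G$, hence at most $\ex(n,K_r,F)$ in number. This is exactly the paper's proof, and it yields your deletion claim as a byproduct (take $B$ to be the charged hyperedges; then $\Gamma(\h\setminus B)\subseteq G$ is $F$-free), with no Hall's theorem, no iteration over copies of $F$, and no possibility of getting stuck --- a hyperedge with no fresh pair is not an obstruction but simply lands in the other class of the partition.
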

\begin{proof}
We have already seen $\ex(n,K_r,F)\le\ex_r(n,F)$. To prove $\ex_r(n,F)\le\ex(n,K_r,F)+\ex(n,F)$, let $\h$ be an $r$-uniform, Berge-$F$-free hypergraph on a vertex set $V$ of $n$ elements. We consider the hyperedges of $\h$ one-by-one, and we will mark elements of $\binom{V}{2}\cup\binom{V}{r}$. For each hyperedge, we mark a pair of its vertices that we have not marked yet; if all those pairs are already marked, then we mark the hyperedge itself.

Let $\tilde\h$ be the set of the marked pairs and hyperedges. $\tilde\h\cap\binom{V}{2}$ is a graph with no copy of $F$. Indeed, since we only marked one edge for each hyperedge, if the graph contained a copy of $F$, its edges would be contained by distinct hyperedges of $\h$, which would form a Berge-$F$. So $\bigl|\tilde\h\cap\binom{V}{2}\bigr|\le\ex(n,F)$. Meanwhile each hyperedge in $\tilde\h\cap\binom{V}{r}$ was marked because each pair of vertices in it had already been marked, so they form an $r$-clique in $\tilde\h\cap\binom{V}{2}$. But $\tilde\h\cap\binom{V}{2}$ is $F$-free, so the number of $r$-cliques in it is at most $\ex(n,K_r,F)$. Thus, $\bigl|\tilde\h\cap\binom{V}{r}\bigr|\le\ex(n,K_r,F)$. Since $\abs{\h}=\bigl|\tilde\h\bigr|$, the proof is complete.
\end{proof}

%we have $\ex_r(n,F)=O(n^2)$ if and only if $\ex(n,K_r,F)=O(n^2)$, and we have $\ex_r(n,F)=\omega(n^2)$ if and only if $\ex(n,K_r,F)=\omega(n^2)$. If $\ex_r(n,F)=o(n^2)$, then $\ex(n,K_r,F)=o(n^2)$. If $\ex(n,K_r,F)=\omega(n^2)$, then $\ex_r(n,F)=(1+o(1))\ex(n,K_r,F)$.

Proposition \ref{relation} implies that the two functions $\ex(n,K_r,F)$ and $\ex_r(n,F)$ differ by only $O(n^2)$. So $\ex_r(n,F)=O(n^2)$ if and only if $\ex(n,K_r,F)=O(n^2)$, and we have $\ex_r(n,F)=\omega(n^2)$ if and only if $\ex(n,K_r,F)=\omega(n^2)$. Moreover, if $\ex(n,K_r,F)=\omega(n^2)$, then $\ex_r(n,F)=(1+o(1))\ex(n,K_r,F)$. If $F$ is bipartite, since $\ex(n,F) = o(n^2)$, the difference is even smaller --- only $o(n^2)$. So for bipartite $F$, $\ex_r(n,F)=o(n^2)$ if and only if $\ex(n,K_r,F)=o(n^2)$, and if $\ex(n,K_r,F)=\Omega(n^2)$, then $\ex_r(n,F)=(1+o(1))\ex(n,K_r,F)$.

On the other hand, note that for any non-bipartite $F$, even if we know $\ex(n,K_r,F) \allowbreak= o(n^2)$, Proposition \ref{relation} does not imply $\ex_r(n,F)= o(n^2)$; so $\ex(n,K_r,F)$ does not tell us much about $\thres(F)$. 

Combining Proposition \ref{Alon_Shikhelman} and Proposition \ref{relation}, we can obtain the following nice proposition discovered by Palmer et al.\ \cite{Palmer}. We note, however, that the proof given in \cite{Palmer} is different from the simple proof mentioned here.

\begin{prop}
\label{chromatic_number_threshold}
Let $r\ge2$. If $r<\chi(F)$, then $\ex_r(n,F)=\Theta(n^r)$ and if $r\ge\chi(F)$, then $\ex_r(n,F)=o(n^r)$.

More precisely, if $r<\chi(F)$, then $\ex_r(n,F) = (1+o(1))\binom{\chi(F)-1}{r}\bigl(\frac{n}{\chi(F)-1}\bigr)^r$, and if $r\ge\chi(F)$, then $\ex_r(n,F)\le n^{r-\epsilon(r,F)}$ for some $\epsilon(r,F)>0$.
\end{prop}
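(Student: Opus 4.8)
The plan is to derive Proposition~\ref{chromatic_number_threshold} directly by combining the two preceding results. The statement has two regimes, $r < \chi(F)$ and $r \ge \chi(F)$, and in each regime I would feed the corresponding half of Proposition~\ref{Alon_Shikhelman} into the sandwich inequality of Proposition~\ref{relation}, namely $\ex(n,K_r,F) \le \ex_r(n,F) \le \ex(n,K_r,F) + \ex(n,F)$. The whole point is that the additive error term $\ex(n,F)$ is only $O(n^2)$ (by Proposition~\ref{easybound}, or just by the trivial bound $\ex(n,F) \le \binom{n}{2}$), so it is negligible against the $n^r$-scale quantities that appear for $r \ge 3$.

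**For the regime $r < \chi(F)$,** Proposition~\ref{Alon_Shikhelman} gives $\ex(n,K_r,F) = (1+o(1))\binom{\chi(F)-1}{r}\bigl(\frac{n}{\chi(F)-1}\bigr)^r$, which is $\Theta(n^r)$ with $r \ge 2$. Plugging into Proposition~\ref{relation}, the lower bound $\ex_r(n,F) \ge \ex(n,K_r,F)$ already gives the matching $\Omega(n^r)$, and the upper bound adds only $\ex(n,F) = O(n^2) = o(n^r)$ when $r \ge 3$, so the leading term is preserved and $\ex_r(n,F) = (1+o(1))\binom{\chi(F)-1}{r}\bigl(\frac{n}{\chi(F)-1}\bigr)^r$. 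I would note the one genuinely separate case: if $r < \chi(F)$ forces $r = 2$, then $F$ is non-bipartite and $\ex_2(n,F) = \ex(n,F)$ is itself exactly the classical Turán quantity, which is $\Theta(n^2)$ by Erdős--Stone with the asymptotics matching the stated formula for $r=2$; this is consistent and requires no hypergraph machinery, so I would dispose of it in a line.

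**For the regime $r \ge \chi(F)$,** Proposition~\ref{Alon_Shikhelman} gives $\ex(n,K_r,F) \le n^{r-\epsilon(r,F)}$ for some $\epsilon(r,F) > 0$. Here I must be slightly careful with the error term: the upper half of Proposition~\ref{relation} yields $\ex_r(n,F) \le n^{r-\epsilon(r,F)} + \ex(n,F) \le n^{r-\epsilon(r,F)} + \binom{n}{2}$. Since $r \ge \chi(F) \ge 3$ (as $F$ has at least one edge, and the interesting non-bipartite cases have $\chi(F) \ge 3$), the term $\binom{n}{2} = O(n^2)$ is dominated by $n^{r-\epsilon'}$ for a suitably shrunk exponent. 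The clean way to phrase the conclusion is to absorb the polynomial error: there is some (possibly smaller) $\epsilon'(r,F) > 0$ with $\ex_r(n,F) \le n^{r-\epsilon'(r,F)}$, giving in particular $\ex_r(n,F) = o(n^r)$. I would simply relabel $\epsilon'$ as $\epsilon$ in the statement.

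**The main obstacle is not really an obstacle** but rather a matter of careful bookkeeping: ensuring the additive $O(n^2)$ error from $\ex(n,F)$ is genuinely negligible in every subcase, and verifying that the boundary value $r = \chi(F)$ lands in the second regime (which it does, by the strict inequality $r < \chi(F)$ defining the first regime). One should also confirm that $r \ge 3$ holds whenever we need $n^2 = o(n^r)$; the only place $r = 2$ can occur is within the first regime when $\chi(F) \ge 3$, and there the formula degenerates gracefully to the Turán asymptotics rather than requiring the error-absorption step. Thus the proof is essentially a two-line application of Propositions~\ref{Alon_Shikhelman} and~\ref{relation} in each regime, with the only subtlety being the relabeling of the exponent $\epsilon$ after swallowing the quadratic error.
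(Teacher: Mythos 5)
Your proposal follows exactly the paper's own route: the paper obtains Proposition~\ref{chromatic_number_threshold} precisely by feeding the two cases of Proposition~\ref{Alon_Shikhelman} into the sandwich inequality of Proposition~\ref{relation} and absorbing the additive $\ex(n,F)=O(n^2)$ term, so in approach and in essentially all details your write-up matches (indeed, elaborates on) what the paper does.

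One slip is worth fixing. Your claim that ``the only place $r=2$ can occur is within the first regime when $\chi(F)\ge 3$'' is false: if $F$ is bipartite, then $\chi(F)=2$ and $r=2$ lands in the \emph{second} regime, where your argument explicitly assumes $r\ge\chi(F)\ge 3$. So, as written, your proof does not cover the assertion $\ex_2(n,F)\le n^{2-\epsilon(2,F)}$ for bipartite $F$. The repair is the same identification you already use for $r=2$ in the first regime: a $2$-uniform Berge-$F$ is just a copy of $F$, so $\ex_2(n,F)=\ex(n,F)=\ex(n,K_2,F)\le n^{2-\epsilon(2,F)}$ directly by the $r\ge\chi(F)$ case of Proposition~\ref{Alon_Shikhelman} --- no sandwich needed, which is just as well, since Proposition~\ref{relation} is only stated for $r\ge 3$. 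With that one-line patch the argument is complete and coincides with the paper's.
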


Below we outline some interesting facts about the behavior of $\ex_r(n,F)$ as $r$ grows.

Proposition \ref{chromatic_number_threshold} shows that as $r$ increases from $2$ until $\chi(F)-1$, the function $\ex_r(n,F)$ increases, and from $r = \chi(F)$, it is $o(n^r)$. From $r \ge \abs{F}$ (at the latest), it becomes $O(n^2)$ again (by Proposition \ref{easybound}). However, the decrease does not stop there. As shown by Theorem \ref{ramseybound}, from some point, it becomes $o(n^2)$.

In general, we do not know much about the behavior of $\ex_r(n, F)$ when $r$ is between $\chi(F)$ and $\abs{F}-1$. In the special case of $F=K_s$, we know more. As $r$ increases from $\chi(F)-1$ to $\chi(F)$, $\ex_r(n, F)$ immediately jumps from $\Theta(n^{\chi(F)-1})$ to $O(n^2)$ (by Proposition~\ref{easybound} since  $\abs{V(F)}=\chi(F) =s$), and it is at most $O(n^2)$ for all $r \ge \chi(F)$. It would be very interesting to determine the precise threshold $\thres(K_s)$ for when it becomes sub-quadratic.

It is also notable that $\ex_r(n,F)$ may increase with $r$ in the range  $\chi(F) \le r \le \abs{F}-1$ (as will be shown by the proposition below). Theorem~1.2 in Alon and Shikhelman's paper \cite{Alon_S} shows that if $s\ge 2r-2$ and $t \ge (s-1)! + 1$, then $\ex(n, K_r, K_{s,t}) = \Theta\bigl(n^{r-\binom{r}{2}/s}\bigr)$. It is easy to check that $n^{r-\binom{r}{2}/s}$ monotonously increases in $r$ between $2$ and $\floor{\frac{s}{2}}+1$, and $n^{r-\binom{r}{2}/s}=\Omega(n^{2.25})$ when $3\le r\le \floor{\frac{s}{2}}+1$. So combining this with Proposition \ref{relation}, we get
\begin{prop}
\label{Kst}
For any $2 \le r \le \floor{\frac{s}{2}}+1$ and $t\ge(s-1)!+1$, we have $\ex_r(n, K_{s,t}) = \Theta\bigl(n^{r-\binom{r}{2}/s}\bigr)$.
\end{prop}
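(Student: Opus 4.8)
The plan is to combine the Alon--Shikhelman estimate for $\ex(n,K_r,K_{s,t})$ with the sandwich inequality of Proposition~\ref{relation}. First I would observe that the hypothesis $r\le\floor{\frac{s}{2}}+1$ is exactly the condition $s\ge 2r-2$ appearing in Alon and Shikhelman's Theorem~1.2 in \cite{Alon_S}: from $r-1\le\floor{\frac{s}{2}}\le\frac{s}{2}$ one gets $s\ge 2r-2$. Hence that theorem applies and yields $\ex(n,K_r,K_{s,t})=\Theta\bigl(n^{r-\binom{r}{2}/s}\bigr)$ whenever $t\ge(s-1)!+1$. The whole proof then amounts to transferring this count of $K_r$'s in a $K_{s,t}$-free graph to the Berge number $\ex_r(n,K_{s,t})$.

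For $r=2$ there is nothing to prove beyond the classical case: by definition $\ex_2(n,K_{s,t})=\ex(n,K_{s,t})$, and the exponent $r-\binom{r}{2}/s$ reduces to $2-\frac1s$, so the claim is precisely the K\H ov\'ari--S\'os--Tur\'an upper bound together with the matching norm-graph lower-bound construction, which is valid exactly in the regime $t\ge(s-1)!+1$.

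For $3\le r\le\floor{\frac{s}{2}}+1$ I would invoke Proposition~\ref{relation} (which requires $r\ge3$), giving
\[
\ex(n,K_r,K_{s,t})\le\ex_r(n,K_{s,t})\le\ex(n,K_r,K_{s,t})+\ex(n,K_{s,t}).
\]
The left-hand side is already $\Theta\bigl(n^{r-\binom{r}{2}/s}\bigr)$ by the step above, so it suffices to show that the additive error $\ex(n,K_{s,t})$ is negligible relative to this main term. Since $\ex(n,K_{s,t})=O(n^{2-1/s})=O(n^2)$, this reduces to verifying that the exponent $r-\binom{r}{2}/s$ is strictly larger than $2$.

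That exponent check is the only real obstacle, and it is a short monotonicity computation. For fixed $r$ the exponent $r-\frac{r(r-1)}{2s}$ is increasing in $s$, so its minimum over the admissible range is attained at the smallest allowed value $s=2r-2$, where it equals $r-\frac{r(r-1)}{2(2r-2)}=r-\frac{r}{4}=\frac{3r}{4}$. For $r\ge3$ this is at least $\frac94>2$, which simultaneously confirms the $\Omega(n^{2.25})$ growth asserted in the surrounding discussion and shows $\ex(n,K_{s,t})=o\bigl(n^{r-\binom{r}{2}/s}\bigr)$. Consequently both sides of the sandwich have the same order, and $\ex_r(n,K_{s,t})=\Theta\bigl(n^{r-\binom{r}{2}/s}\bigr)$, as claimed.
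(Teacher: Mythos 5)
Your proposal is correct and follows essentially the same route as the paper: invoke Alon--Shikhelman's Theorem~1.2 under the equivalent hypothesis $s\ge 2r-2$, sandwich $\ex_r(n,K_{s,t})$ between $\ex(n,K_r,K_{s,t})$ and $\ex(n,K_r,K_{s,t})+\ex(n,K_{s,t})$ via Proposition~\ref{relation}, and check that the exponent $r-\binom{r}{2}/s$ is at least $\tfrac94$ for $r\ge 3$, so the additive $O(n^2)$ term is negligible. Your explicit treatment of $r=2$ (where Proposition~\ref{relation} is not stated) and your monotonicity check in $s$ rather than in $r$ are only cosmetic differences from the paper's argument.
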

Of course $\ex_2(n,K_{s,t}) = o(n^2)$, while $\ex_3(n,K_{s,t})=\Omega(n^{2.25})$ if $s\ge4$, which implies that $\thres(F)$ is not necessarily the smallest $r\ge 2$ for which $\ex_r(n,F)=o(n^2)$. (Then from some point later on --- $r \ge s+t$ at the latest --- it becomes sub-quadratic again and remains so.)

For a non-bipartite graph $F$, Theorem \ref{ramseybound} implies that for any $r \ge R(F, F \setminus e)$, we have $\ex_r(n,F) = o(\ex(n,F))$. However, it is unclear if the same holds for some bipartite graphs. If $F$ is a forest, then it is known that $\ex_r(n,F) = \Theta(n) = \Theta(\ex(n,F))$. 

\begin{ques}
Is there a bipartite graph $F$ containing a cycle for which the following statement holds: There exists an integer $r_0(F)$ such that $\ex_r(n,F) = o(\ex(n,F))$ for all $r \ge r_0(F)$? If yes, is the same statement true for every bipartite $F$ containing a cycle?
\end{ques}

The analogous question for linear hypergraphs was asked by Verstra\"ete \cite{Timmons}. For $F = C_4$, we ask the following, more precise question about the threshold.

\begin{ques}
Is it true that $\ex_r(n,C_4) = o(n^{1.5})$ for all $r \ge 7$?
\end{ques}
%is it obvious that the following is Berge-C4-free or should we explain?
One can show that for $2 \le r \le 6$, we have $\ex_r(n,C_4) = \Omega(n^{1.5})$. Consider a bipartite $C_4$-free graph $G$ having $\Omega(n^{1.5})$ edges with parts $A$ and $B$. Let $1 \le i \le 3$ and $1 \le j \le 3$. Now replace each vertex $a \in A$ with $i$ vertices $a_1, \ldots, a_i$, and each vertex $b \in B$ with $j$ vertices $b_1, \ldots, b_j$, so that each edge $ab \in E(G)$ is replaced by the hyperedge $\{a_1, \ldots, a_i, b_1, \ldots, b_j\}$, Let $A'$ and $B'$ be the sets replacing $A$ and $B$ respectively. Clearly, the resulting hypergraph $H$ is $(i+j)$-uniform. 

We claim that $H$ is Berge-$C_4$-free. Indeed, suppose for a contradiction that $abcd$ is a $C_4$ in the $2$-shadow of $H$ such that $ab, bc, cd, da$ are contained in distinct hyperedges. Since $i, j \le 3$, it is impossible that the vertices of the $C_4$ are all contained in $A'$ or $B'$. Notice that two of the vertices $a,b,c,d$ must correspond to the same vertex of $G$, because otherwise $abcd$ would be a $C_4$ in $G$ as well. Furthermore, if two adjacent vertices of $abcd$ are in $A'$ (or in $B'$), then they correspond to the same vertex of $G$. We have the following cases.
\begin{itemize}
\item If two opposite vertices of $abcd$, say $a$ and $c$, are in $A'$, and $b$ and $d$ are in $B'$, then suppose w.l.o.g. that $a$ and $c$ correspond to the same vertex of $G$. Then $ab$ and $bc$ are not contained in distinct hyperedges of $H$, a contradiction.
\item If two adjacent vertices, say $a$ and $b$ are in $A'$, and $c$ and $d$ are in $B'$, then $a$ and $b$ correspond to the same vertex of $G$, and so do $c$ and $d$.  Then $ad$ and $bc$ are not contained in distinct hyperedges of $H$.
\item If three vertices, say $a,b,c$, are in $A'$, and $d$ is in $B'$ (or vice-versa), then $a,b,c$ correspond to the same vertex of $G$, so $ad$ and $cd$ are not contained in distinct hyperedges of $H$.
\end{itemize}
As $2 \le i+j \le 6$, this shows that $\ex_r(n,C_4) = \Omega(\ex(n,C_4)) = \Omega(n^{1.5})$ for $2 \le r \le 6$.

%\begin{cor}
%\begin{itemize}
% \item We have $\ex_r(n,F)=O(n^2)$ if and only if $\ex(n,K_r,F)=O(n^2)$. We have $\ex_r(n,F)=\omega(n^2)$ if and only if $\ex(n,K_r,F)=\omega(n^2)$. If $\ex_r(n,F)=o(n^2)$, then $\ex(n,K_r,F)=o(n^2)$. If $\ex(n,K_r,F)=\omega(n^2)$, then $\ex_r(n,F)=(1+o(1))\ex(n,K_r,F)$.
% \item If $F$ is bipartite, then $\ex_r(n,F)=o(n^2)$ if and only if $\ex(n,K_r,F)=o(n^2)$. If $\ex(n,K_r,F)=\Omega(n^2)$, then $\ex_r(n,F)=(1+o(1))\ex(n,K_r,F)$.
% \end{itemize}
% \end{cor}

% It is also of note that there are graphs $F$ for which $\ex(n,F)=o(n^2)$, but $\ex_3(n,F)=\Omega(n^{2.25})$ (so $\thres(F)$ is not necessarily the smallest $r\ge 2$ for which $\ex_r(n,F)=o(n^2)$). By Theorem 1.2 in \cite{Alon_S}, if $s\ge 4$ and $t\ge(s-1)!+1$, then $\ex(n,K_3,K_{s,t})=\Theta(n^{3-3/s})$.

\section{Upper bound --- Proof of Theorem~\ref{ramseybound}}\label{ramseyboundproof}
First we prove Proposition~\ref{easybound} by showing that $\ex(n,F)$ is an upper bound for $\ex_r(n,F)$, whenever $r\ge\abs{V(F)}$.

\begin{boldproof}[Proof of Proposition~\ref{easybound}]
Assume $\h = (V,\EE)$ is an $r$-uniform hypergraph which contains no Berge-$F$. One by one, for every hyperedge $h \in \EE$ we take an edge $e \subset h$ which has not yet been taken. By our assumption $r \ge \abs{V(F)}$ we can always do this, for otherwise we would have a complete $K_r$, and the corresponding hyperedges would form a Berge-$F$. After completing this procedure, we obtain a graph $G$ in which the number of edges is equal to the number of hyperedges in $\h$. Clearly $G$ is $F$-free and thus has at most $\ex(n,F)$ edges, completing the proof.
\end{boldproof}
Another essential tool in some of our proofs is the graph removal lemma. We recall it here without proof.
\begin{lem}[Graph removal lemma]
Let $F$ be a fixed graph. For any $\varepsilon>0$, there is a $\delta>0$ such that for every graph $G$ which has at most $\delta\abs{V(G)}^{\abs{V(F)}}$ copies of $F$, there exists a set $S\subseteq E(G)$ of $\varepsilon n^2$ edges such that every copy of $F$ in $G$ contains at least one edge from $S$.
\end{lem}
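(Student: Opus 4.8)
The plan is to deduce the statement from Szemer\'edi's Regularity Lemma together with its associated counting (embedding) lemma, which is the classical route to the removal lemma. Write $k=\abs{V(F)}$ and $n=\abs{V(G)}$. I would argue the contrapositive: it suffices to show that if no set of at most $\varepsilon n^2$ edges meets every copy of $F$ (equivalently, $G$ cannot be made $F$-free by deleting $\varepsilon n^2$ edges), then $G$ already contains at least $\delta n^{k}$ copies of $F$ for a suitable $\delta=\delta(\varepsilon,F)>0$. Taking the contrapositive of this yields exactly the lemma, with the deleted edges serving as the desired set $S$.

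First I would apply the Regularity Lemma with a small regularity parameter $\eta>0$ and a lower bound $t_0$ on the number of parts, both to be fixed later, obtaining an $\eta$-regular partition $V(G)=V_0\cup V_1\cup\dots\cup V_m$ into $m$ equal-sized clusters ($t_0\le m\le M(\eta,t_0)$) with an exceptional set satisfying $\abs{V_0}\le\eta n$. Next comes the \emph{cleaning} step: delete every edge inside a single cluster, every edge incident to $V_0$, every edge between a pair $(V_i,V_j)$ that is not $\eta$-regular, and every edge between a regular pair whose density is below a threshold $d>0$. A routine count bounds the total number of deleted edges: the within-cluster edges contribute at most $n^2/(2m)$, the $V_0$-edges and the irregular pairs at most $\eta n^2$ each, and the low-density pairs at most $dn^2/2$. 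Choosing $d\le\varepsilon/2$, then $\eta$ small enough, and then $t_0$ large enough (so $1/(2t_0)\le\varepsilon/4$) makes this at most $\varepsilon n^2$. Call the cleaned graph $G'$.

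By the contrapositive hypothesis the set of deleted edges is not a hitting set, so $G'$ still contains at least one copy of $F$. I would then invoke the counting (embedding) lemma. The vertices of this copy lie in clusters $V_{i_1},\dots,V_{i_k}$ such that each edge of $F$ runs between a regular pair of density at least $d$ (adjacent vertices of $F$ necessarily land in distinct clusters, since all within-cluster edges were removed). Feeding this cluster configuration and the surviving dense regular pairs into the counting lemma produces at least $c\,(n/m)^{k}$ copies of $F$ in $G'$, hence in $G$, for some $c=c(d,k)>0$, provided $\eta$ was chosen small enough in terms of $d$ and $k$. Since $m\le M(\eta,t_0)$ is bounded in terms of $\varepsilon$ and $F$, this is at least $\delta n^{k}$ with $\delta=c/M(\eta,t_0)^{k}>0$, completing the contrapositive.

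The main obstacle is the usual tension in the ordering of the parameters: the counting lemma demands $\eta$ small relative to $d$ and $k$, whereas the cleaning bound demands $d$ and $\eta$ small relative to $\varepsilon$, and the Regularity Lemma fixes $m$ (and hence $\delta$) only \emph{after} $\eta$ and $t_0$ are chosen. I would therefore fix the constants strictly in the order $\varepsilon \to d \to (\eta,t_0) \to m \to \delta$, verifying at each stage that the newly chosen quantity is small (or large) enough in terms of those already fixed. The only substantial inputs are the Regularity Lemma and the counting lemma themselves, both of which I would cite as standard rather than reprove.
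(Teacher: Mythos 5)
The paper does not prove this lemma at all: it is stated with the remark ``We recall it here without proof,'' as a standard black-box tool, so there is no in-paper argument to compare against. Your outline is the classical regularity-method proof of the graph removal lemma (contrapositive formulation, regularity partition, cleaning step, counting lemma, with the parameter dependencies $\varepsilon \to d \to (\eta,t_0) \to m \to \delta$ handled in the correct order), and it is correct. The only technical point worth making explicit is that after cleaning, non-adjacent vertices of the surviving copy of $F$ may land in the \emph{same} cluster, so you need the version of the counting lemma that permits repeated cluster indices for non-adjacent vertices (or you split such a cluster into $\le \abs{V(F)}$ large subsets, which stay pairwise regular with a slightly worse regularity parameter); with that standard remark the argument is complete.
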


We wish to apply the removal lemma to the 2-shadow of $F$, denoted $\Gamma(F)$. To this end, we prove the following claim.

\begin{claim}
\label{numberbound}
Let $F$ be a fixed graph, and let $e\in E(F)$ and $r\ge \abs{V(F)}$. Let $\h$ be an $r$-uniform hypergraph on $n$ vertices with no Berge-$F$. Then the number of copies of $F$ in $\Gamma(\h)$ is $o(n^{\abs{V(F)}})$.
\end{claim}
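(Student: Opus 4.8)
The plan is to exploit the Berge-$F$-freeness in its sharpest form. By the characterization recorded just after Definition~\ref{Berge}, a copy of $F$ in $\Gamma(\h)$ extends to a Berge-$F$ exactly when the edges of that copy admit a system of distinct representatives (an SDR) among the hyperedges containing them, i.e.\ a matching saturating $E(F)$ in the bipartite incidence graph between the edges of the copy and the hyperedges of $\h$. Since $\h$ contains no Berge-$F$, \emph{every} copy of $F$ in $\Gamma(\h)$ must fail to have such an SDR. So it suffices to count copies of $F$ whose edge set admits no SDR and show that there are only $o(n^{\abs{V(F)}})$ of them.

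First I would apply the defect form of Hall's theorem: if a copy of $F$ has no SDR, then some subset $E'\subseteq E(F)$ of its edges, with $k:=\abs{E'}\ge 2$ (every shadow edge lies in at least one hyperedge, so no singleton can be deficient), is contained in a family of at most $k-1$ hyperedges. Assigning to each edge of $E'$ one hyperedge containing it gives a map from a $k$-element set into a set of size at most $k-1$, so by pigeonhole two distinct edges $f_1,f_2\in E'$ lie in a common hyperedge $h$. As $f_1$ and $f_2$ are distinct edges of a simple graph, their endpoints form a set of at least three vertices, all contained in the single hyperedge $h$.

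The count then becomes elementary. Every SDR-free copy of $F$ must, for some pair of distinct edges $f_1,f_2\in E(F)$, send the (at least three) vertices spanned by $f_1$ and $f_2$ into one common hyperedge. For each fixed abstract pair $\{f_1,f_2\}$ I would choose the hyperedge $h$ (at most $\abs{\EE}$ ways), place these $\ge 3$ vertices inside $h$ ($O(1)$ ways, since $r$ is a constant), and place the remaining $\le\abs{V(F)}-3$ vertices of the copy arbitrarily ($\le n^{\abs{V(F)}-3}$ ways). By Proposition~\ref{easybound} — which is exactly where the hypothesis $r\ge\abs{V(F)}$ is used — we have $\abs{\EE}\le\ex_r(n,F)\le\ex(n,F)=O(n^2)$, so each pair contributes $O(n^2)\cdot O(n^{\abs{V(F)}-3})=O(n^{\abs{V(F)}-1})$ copies. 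Summing over the $\binom{\abs{E(F)}}{2}=O(1)$ pairs gives $O(n^{\abs{V(F)}-1})=o(n^{\abs{V(F)}})$, as required.

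The main obstacle is organizing the count so that it beats the trivial bound $n^{\abs{V(F)}}$. The naive route — take a minimal Hall violator $E'$, choose its $k-1$ covering hyperedges freely, and then place $V(F)$ inside their union — is too wasteful: selecting $k-1$ hyperedges already costs $n^{2(k-1)}$, which can exceed $n^{\abs{V(F)}}$ whenever $E'$ spans few vertices (for instance a ``trapped'' triangle whose three edges sit in two hyperedges). The key insight that avoids this is that one never needs to invoke more than a single hyperedge: the pigeonhole step pins three vertices of the copy inside one hyperedge $h$, and paying $O(n^2)$ for $h$ while fixing three of its vertices saves a factor of $n$ relative to the trivial count. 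The only points requiring care are that two distinct edges genuinely span at least three vertices (so three vertices, not two, are pinned) and that ranging over the constantly many abstract edge-pairs of $F$ captures every SDR-free copy.
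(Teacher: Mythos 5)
Your proof is correct and takes essentially the same approach as the paper: both rest on the observation that every copy of $F$ in $\Gamma(\h)$ must have two distinct edges (hence at least three vertices) inside a single hyperedge, and then count such configurations as $O\bigl(\abs{E(\h)}\cdot n^{\abs{V(F)}-3}\bigr)=O(n^{\abs{V(F)}-1})$ using Proposition~\ref{easybound}. The only difference is cosmetic: you reach the key observation via defect Hall plus pigeonhole, whereas the paper notes directly that if no hyperedge contained two edges of the copy, then arbitrary choices of containing hyperedges would automatically be distinct and would form a Berge-$F$.
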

\begin{proof}
Any copy of $F$ in $\Gamma(\h)$ has at least two edges (and therefore at least three vertices) in some hyperedge of $\h$, otherwise the hyperedges containing the edges of $F$ would form a Berge-$F$. Thus we have the following upper bound: 
\begin{displaymath}
\# \{\text{$F$-copies in $\Gamma(\h)$}\} \le \abs{E(\h)} \binom{\abs{V(F)}}{3} n^{\abs{V(F)}-3}.
\end{displaymath}
$\binom{\abs{V(F)}}{3}$ is considered constant, and by Proposition~\ref{easybound} $\abs{E(\h)}=O(n^2)$. So the number of copies of $F$ is $O(n^{\abs{V(F)}-1})$ and so $o(n^{\abs{V(F)}})$.
\end{proof}

From now on, we consider $F$ to be a fixed graph, $e\in E(F)$, and $r\ge R(F,F\setminus e)$. We consider an $r$-uniform hypergraph $\h$ with no Berge-$F$.

By Claim~\ref{numberbound} and the graph removal lemma, there are $o(n^2)$ edges such that every copy of $F$ in the 2-shadow of $\h$ contains one of these edges. Call the set of these edges $\R$. 

\begin{claim}\label{counting}
Every hyperedge of $\h$ contains an edge from $\R$ which is contained in at most $\abs{E(F)}-1$ hyperedges.
\end{claim}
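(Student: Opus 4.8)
The plan is to argue by contradiction: if some hyperedge $h$ failed the claim, I would construct a Berge-$F$ inside $\h$, contradicting the standing assumption that $\h$ is Berge-$F$-free. So suppose $h$ is a hyperedge such that every edge from $\R$ contained in $h$ lies in at least $\abs{E(F)}$ hyperedges of $\h$.

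The key device is a Ramsey coloring of the complete graph on the $r$ vertices of $h$ (which is a $K_r$ in the $2$-shadow, since all pairs inside $h$ are edges of $\Gamma(\h)$). Color an edge with the \emph{first} color if it does \emph{not} belong to $\R$, and with the \emph{second} color if it does belong to $\R$. Because $r \ge R(F,F\setminus e)$, this coloring contains either a copy of $F$ in the first color or a copy of $F\setminus e$ in the second color. A first-colored copy of $F$ would be a copy of $F$ in $\Gamma(\h)$ none of whose edges lies in $\R$; but by the defining property of $\R$, every copy of $F$ in the $2$-shadow uses an edge of $\R$, so this cannot occur. Hence I obtain a copy of $F\setminus e$, all of whose edges lie in $\R$ and all of whose vertices lie in $h$.

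Next I complete this to a full copy of $F$: writing $e=\{u,v\}$, the images of $u$ and $v$ both lie in $h$, so the pair between them is an edge of $\Gamma(\h)$, and adding it yields a copy of $F$ in the $2$-shadow. I then build a Berge-$F$ by assigning distinct hyperedges to the $\abs{E(F)}$ edges of this copy. First I assign $h$ itself to the edge $e$ (legitimately, since both endpoints of $e$ lie in $h$). The remaining $\abs{E(F)}-1$ edges are exactly the edges of the $F\setminus e$ copy, each of which lies in $\R$ and hence, by the contradiction hypothesis, in at least $\abs{E(F)}$ hyperedges. Processing these edges one at a time, at each step I have used at most $\abs{E(F)}-1$ hyperedges, so a hyperedge containing the current edge and distinct from all previously chosen ones (including $h$) is always available. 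This yields $\abs{E(F)}$ pairwise distinct hyperedges realizing the copy of $F$, i.e., a Berge-$F$ in $\h$, which is the desired contradiction.

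I expect the main subtlety to be the orientation of the two-coloring: one must color by membership in $\R$ in precisely the way that makes the first color class incapable of containing $F$, so that Ramsey is forced to return an all-$\R$ copy of $F\setminus e$ rather than an uninformative configuration. Once that copy is in hand, the greedy selection of distinct hyperedges is routine; its only input is the quantitative bound ``$\ge\abs{E(F)}$ hyperedges'' supplied by the contradiction hypothesis, which provides exactly enough slack to keep the greedy choice alive through all $\abs{E(F)}-1$ steps.
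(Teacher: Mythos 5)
Your proof is correct and follows exactly the paper's argument: the same Ramsey coloring of the pairs inside $h$ (non-$\R$ edges in the first color, $\R$-edges in the second), the same observation that the first color class cannot contain $F$ by the defining property of $\R$, and the same completion of the resulting $F\setminus e$ copy using $h$ itself for the missing edge plus distinct hyperedges for the rest. The only difference is that you spell out the greedy selection of distinct hyperedges in more detail than the paper, which simply asserts it from the hypothesis that each $\R$-edge in $h$ lies in at least $\abs{E(F)}$ hyperedges.
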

\begin{proof}
By contradiction, assume that there is a hyperedge $h$ such that every edge from $\R$ contained in $h$ is in at least $\abs{E(F)}$ hyperedges. By the definition of $\R$, $\Gamma(\{h\})\setminus \R$ cannot contain a copy of $F$. Applying Ramsey's theorem with the edges of $\Gamma(\{h\})\setminus \R$ colored with the first color and those in $\Gamma(\{h\})\cap\R$ colored with the second, we obtain that $\Gamma(\{h\})\cap\R$ must contain a copy of $F\setminus e$. Let $\hat e$ be an edge in $h$ whose addition would complete this copy of $F$. By our assumption we can select $\abs{E(F)}$ different hyperedges to represent every edge in this copy of $F$: $h$ itself for $\hat e$, and other hyperedges containing the rest of the edges. These hyperedges form a Berge-$F$ in $\h$, a contradiction.
\end{proof}
We are now ready to complete the proof of Theorem~\ref{ramseybound}. For every hyperedge $h \in \h$ we apply Claim~\ref{counting} to find an edge $e\in \R$, $e \subset h$ which is contained in at most $\abs{E(F)}-1$ hyperedges. It follows that the number of edges in $\h$ is bounded by $\abs{E(F)}-1$ times the number of edges found in this way, and thus
\begin{displaymath}
\abs{E(\h)} \le (\abs{E(F)}-1)\abs{\R} = o(n^2).
\end{displaymath}

\section{Linear hypergraphs --- Proof of Theorem~\ref{exact}}\label{exactproof}
\subsection{Construction showing \texorpdfstring{$\thres^L(F) \ge \chi(F)$}{th\textasciicircum L(F) >= chi(F)}}\label{linearconstruction}
First, we show that for any $F$ we have $\thres^L(F) \ge \chi(F)$. Let $2\le r\le\chi(F)-1$. We construct an $r$-uniform linear hypergraph on $n$ vertices with $\Omega(n^2)$ edges and no Berge-$F$. Take $r$ sets $V_1,V_2,\dots,V_{r}$ of $\bigl\lfloor\frac{n}{r}\bigr\rfloor$ vertices each. For each $i$, $1\le i \le r$, let $V_i = \{v_{i,1},v_{i,2},\dots,v_{i,\lfloor n/r \rfloor}\}$. The hyperedges are the sets of $v_{i,j}$'s of the form $e_{x,m}=\{v_{1,x},v_{2,x+m},\dots,v_{r,x+(r-1)m}\}$ where $x \in \bigl\{1,2,\dots,\bigl\lfloor\frac{n}{2r}\bigr\rfloor\bigr\}$ and $m \in \bigl\{0,1,\dots,\bigl\lfloor\frac{n}{2r(r-1)}\bigr\rfloor\bigr\}$.

\begin{figure}
\definecolor{aqaqaq}{rgb}{0.63,0.63,0.63}
\begin{tikzpicture}[line cap=round,line join=round,>=triangle 45,x=1.0cm,y=1.0cm]
\draw [line width=1.6pt] (0,0)-- (0,3);
\draw [line width=1.6pt] (1,0)-- (1,3);
\draw [line width=1.6pt] (2,0)-- (2,3);
\draw [line width=1.6pt] (3,0)-- (3,3);
\draw [line width=1.6pt] (4,0)-- (4,3);
\draw [line width=1.6pt,dash pattern=on 5pt off 5pt] (0,0)-- (3,3);
\draw [line width=1.6pt,dash pattern=on 5pt off 5pt] (2,0)-- (5,3);
\draw [line width=1.6pt,dotted] (0,0)-- (6,3);
\draw [line width=1.6pt,dash pattern=on 5pt off 5pt] (1,0)-- (4,3);
\draw [line width=1.6pt] (5,3)-- (5,0);
\draw [line width=1.6pt,dash pattern=on 5pt off 5pt] (3,0)-- (6,3);
\draw [line width=1.6pt,dash pattern=on 5pt off 5pt] (4,0)-- (7,3);
\draw [line width=1.6pt,dotted] (1,0)-- (7,3);
\draw [line width=1.6pt,dash pattern=on 5pt off 5pt] (5,0)-- (8,3);
\draw [line width=1.6pt,dotted] (2,0)-- (8,3);
\draw [line width=1.6pt,dotted] (3,0)-- (9,3);
\draw [line width=1.6pt,dotted] (4,0)-- (10,3);
\draw [line width=1.6pt,dotted] (5,0)-- (11,3);
\begin{scriptsize}
\fill [color=aqaqaq] (6,3) circle (1.5pt);
\fill [color=aqaqaq] (0,0) circle (1.5pt);
\fill [color=aqaqaq] (5,3) circle (1.5pt);
\fill [color=aqaqaq] (2,0) circle (1.5pt);
\fill [color=aqaqaq] (1,0) circle (1.5pt);
\fill [color=aqaqaq] (3,0) circle (1.5pt);
\fill [color=aqaqaq] (4,0) circle (1.5pt);
\fill [color=aqaqaq] (0,1) circle (1.5pt);
\fill [color=aqaqaq] (1,1) circle (1.5pt);
\fill [color=aqaqaq] (2,1) circle (1.5pt);
\fill [color=aqaqaq] (3,1) circle (1.5pt);
\fill [color=aqaqaq] (4,1) circle (1.5pt);
\fill [color=aqaqaq] (0,2) circle (1.5pt);
\fill [color=aqaqaq] (1,2) circle (1.5pt);
\fill [color=aqaqaq] (2,2) circle (1.5pt);
\fill [color=aqaqaq] (3,2) circle (1.5pt);
\fill [color=aqaqaq] (4,2) circle (1.5pt);
\fill [color=aqaqaq] (0,3) circle (1.5pt);
\fill [color=aqaqaq] (1,3) circle (1.5pt);
\fill [color=aqaqaq] (2,3) circle (1.5pt);
\fill [color=aqaqaq] (3,3) circle (1.5pt);
\fill [color=aqaqaq] (4,3) circle (1.5pt);
\fill [color=aqaqaq] (5,0) circle (1.5pt);
\fill [color=aqaqaq] (6,0) circle (1.5pt);
\fill [color=aqaqaq] (7,0) circle (1.5pt);
\fill [color=aqaqaq] (5,1) circle (1.5pt);
\fill [color=aqaqaq] (6,1) circle (1.5pt);
\fill [color=aqaqaq] (7,1) circle (1.5pt);
\fill [color=aqaqaq] (5,2) circle (1.5pt);
\fill [color=aqaqaq] (6,2) circle (1.5pt);
\fill [color=aqaqaq] (7,2) circle (1.5pt);
\fill [color=aqaqaq] (7,3) circle (1.5pt);
\fill [color=aqaqaq] (8,0) circle (1.5pt);
\fill [color=aqaqaq] (8,1) circle (1.5pt);
\fill [color=aqaqaq] (8,2) circle (1.5pt);
\fill [color=aqaqaq] (8,3) circle (1.5pt);
\fill [color=aqaqaq] (9,0) circle (1.5pt);
\fill [color=aqaqaq] (9,1) circle (1.5pt);
\fill [color=aqaqaq] (9,2) circle (1.5pt);
\fill [color=aqaqaq] (9,3) circle (1.5pt);
\fill [color=aqaqaq] (10,0) circle (1.5pt);
\fill [color=aqaqaq] (10,1) circle (1.5pt);
\fill [color=aqaqaq] (10,2) circle (1.5pt);
\fill [color=aqaqaq] (10,3) circle (1.5pt);
\fill [color=aqaqaq] (11,0) circle (1.5pt);
\fill [color=aqaqaq] (11,1) circle (1.5pt);
\fill [color=aqaqaq] (11,2) circle (1.5pt);
\fill [color=aqaqaq] (11,3) circle (1.5pt);
\draw (6,3) circle (1.5pt);
\draw (0,0) circle (1.5pt);
\draw (5,3) circle (1.5pt);
\draw (2,0) circle (1.5pt);
\draw (1,0) circle (1.5pt);
\draw (3,0) circle (1.5pt);
\draw (4,0) circle (1.5pt);
\draw (0,1) circle (1.5pt);
\draw (1,1) circle (1.5pt);
\draw (2,1) circle (1.5pt);
\draw (3,1) circle (1.5pt);
\draw (4,1) circle (1.5pt);
\draw (0,2) circle (1.5pt);
\draw (1,2) circle (1.5pt);
\draw (2,2) circle (1.5pt);
\draw (3,2) circle (1.5pt);
\draw (4,2) circle (1.5pt);
\draw (0,3) circle (1.5pt);
\draw (1,3) circle (1.5pt);
\draw (2,3) circle (1.5pt);
\draw (3,3) circle (1.5pt);
\draw (4,3) circle (1.5pt);
\draw (5,0) circle (1.5pt);
\draw (6,0) circle (1.5pt);
\draw (7,0) circle (1.5pt);
\draw (5,1) circle (1.5pt);
\draw (6,1) circle (1.5pt);
\draw (7,1) circle (1.5pt);
\draw (5,2) circle (1.5pt);
\draw (6,2) circle (1.5pt);
\draw (7,2) circle (1.5pt);
\draw (7,3) circle (1.5pt);
\draw (8,0) circle (1.5pt);
\draw (8,1) circle (1.5pt);
\draw (8,2) circle (1.5pt);
\draw (8,3) circle (1.5pt);
\draw (9,0) circle (1.5pt);
\draw (9,1) circle (1.5pt);
\draw (9,2) circle (1.5pt);
\draw (9,3) circle (1.5pt);
\draw (10,0) circle (1.5pt);
\draw (10,1) circle (1.5pt);
\draw (10,2) circle (1.5pt);
\draw (10,3) circle (1.5pt);
\draw (11,0) circle (1.5pt);
\draw (11,1) circle (1.5pt);
\draw (11,2) circle (1.5pt);
\draw (11,3) circle (1.5pt);
\draw (0.34,-0.2) node {$v_{1,1}$};
\draw (2.34,-0.2) node {$v_{1,3}$};
\draw (1.34,-0.2) node {$v_{1,2}$};
\draw (0.34,0.8) node {$v_{2,1}$};
\draw (1.34,0.8) node {$v_{2,2}$};
\draw (2.34,0.8) node {$v_{2,3}$};
\end{scriptsize}
\end{tikzpicture}
\caption{The construction in Section~\ref{linearconstruction} (with $r=4$, $n=48$). The solid, dashed and dotted lines represent $e_{x,0}$'s, $e_{x,1}$'s and $e_{x,2}$'s respectively.}
\end{figure}
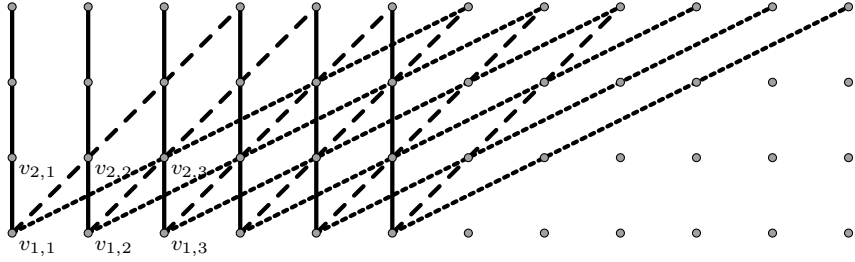

The number of hyperedges in this hypergraph is $\bigl\lfloor\frac{n}{2r}\bigr\rfloor \bigl(\bigl\lfloor\frac{n}{2r(r-1)}\bigr\rfloor+1\bigr)$. The hypergraph is linear: if two different vertices $v_{i_1,j_1}$ and $v_{i_2,j_2}$ are contained in a hyperedge, then $i_1\neq i_2$; and the two vertices uniquely determine the parameters of the hyperedge as $m=\frac{j_2-j_1}{i_2-i_1}$ and $x=j_1-(i_1-1)m$, so they cannot be contained in two hyperedges. The hypergraph has no Berge-$F$ since the 2-shadow contains no copy of $F$: a copy of $F$ would have to contain two adjacent vertices from the same $V_i$ (otherwise the $V_i$'s would form the classes of an $r$-coloring of $F$); but no hyperedge contains two vertices from the same $V_i$.

\subsection{Proof of sharpness: \texorpdfstring{$\thres^L(F) \le \chi(F)$}{th\textasciicircum L(F)<=chi(F)}}\label{linearsharpness}
Now we show that $\thres^L(F) \le \chi(F)$. Let $\h$ be an $r$-uniform ($r\ge\chi(F)$) linear hypergraph on $n$ vertices. A $w$-blowup of $K_{\chi(F)}$ is a complete $\chi(F)$-partite graph with $w$ vertices in each class.
%can we just say "$K_{\chi(F)}$-partite graph with parts of size w" everywhere instead of "$w$-blowup of $K_{\chi(F)}$"?
%It wouldn't make it less problematic, as this one is the usual definition of a blow-up, our other usage is perhaps less common.
\begin{lem}\label{blowup}
For large enough $w$ (which depends on $F$ and $r$, but not on $n$), if the 2-shadow of $\h$, denoted $\Gamma(\h)$, contains a $w$-blowup of $K_{\chi(F)}$, then $\h$ contains a Berge-$F$.
\end{lem}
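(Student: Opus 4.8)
The plan is to use a proper colouring of $F$ to guide an embedding into the blowup, and to exploit linearity so that the required Berge-representatives become automatically distinct. Write $\chi=\chi(F)$ and let $C_1,\dots,C_\chi$ be the colour classes of a proper colouring of $F$; let $W_1,\dots,W_\chi$ be the classes of the $w$-blowup sitting inside $\Gamma(\h)$. I will look for a colour-respecting injection $\psi\colon V(F)\to W_1\cup\dots\cup W_\chi$, mapping each $C_i$ injectively into $W_i$; since distinct $W_i$ are disjoint, any such $\psi$ is globally injective. For each edge $e=\{u,v\}\in E(F)$ the endpoints receive different colours, so $\{\psi(u),\psi(v)\}$ is an edge of the complete multipartite blowup and hence lies in some hyperedge of $\h$; by linearity this hyperedge $h_e$ is \emph{unique}. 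Thus the only thing left to arrange is that $e\mapsto h_e$ is injective.

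The key reduction is the following. If two distinct edges $e,e'$ had $h_e=h_{e'}=h$, then $h$ would contain the images of all endpoints of $e$ and $e'$; as $e\ne e'$, these endpoints comprise at least three distinct vertices of $F$, whose (distinct) images all lie in $h$. Conversely, if no hyperedge of $\h$ contains three or more images under $\psi$, then each hyperedge can serve as $h_e$ for at most one edge, so the representatives are distinct. Hence it suffices to find a colour-respecting injection $\psi$ for which no hyperedge contains three images.

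To control this I exploit linearity to bound the overlap of the hyperedges with the blowup. For a hyperedge $h$ set $S(h)=h\cap(W_1\cup\dots\cup W_\chi)$. Since any two hyperedges of the linear hypergraph $\h$ meet in at most one vertex, no pair of blowup-vertices lies in two hyperedges, whence $\sum_h\binom{|S(h)|}{2}\le\binom{\chi w}{2}$. Using $|S(h)|\le r$ together with $|S(h)|\le 2\binom{|S(h)|}{2}$ for $|S(h)|\ge2$, this yields $\sum_{h:\,|S(h)|\ge 3}|S(h)|^3=O(w^2)$, with the implied constant depending only on $r$. Now choose $\psi$ at random, picking for each $i$ a uniformly random injection $C_i\hookrightarrow W_i$, independently over $i$. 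For any three fixed distinct blowup-vertices, the probability that all three are images of $\psi$ is $O(w^{-3})$, since three distinct vertices of $F$ would have to be sent to them. Hence the expected number of hyperedges containing three or more images is at most $O(w^{-3})\sum_{h:\,|S(h)|\ge3}\binom{|S(h)|}{3}=O(w^{-3})\cdot O(w^2)=O(w^{-1})$. Once $w$ is large enough (and $w\ge|V(F)|$ so the injections exist) this is below $1$, so some $\psi$ leaves every hyperedge with at most two images; by the reduction this produces distinct representatives $h_e$, i.e.\ a Berge-$F$.

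The main obstacle is precisely the distinctness of the Berge-representatives: without linearity a single large hyperedge could absorb many edges of $F$ at once, and a careless embedding could force the reuse of hyperedges. The two ingredients that defeat this are the observation that every collision forces three images into one hyperedge, and the linearity bound $\sum_h\binom{|S(h)|}{2}\le\binom{\chi w}{2}$, which caps the total blowup-weight of the hyperedges at $O(w^2)$ and makes the expected number of triple-collisions vanish as $w\to\infty$.
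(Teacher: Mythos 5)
Your proof is correct, and it shares the paper's skeleton: fix a proper colouring of $F$, consider colour-respecting injections $\psi$ of $V(F)$ into the blowup classes, note that by linearity each edge-image lies in a unique hyperedge, and run a first-moment argument to find a $\psi$ whose edge representatives are distinct. Where you genuinely diverge is the collision count. The paper works edge-pair by edge-pair: for fixed $e_1,e_2\in E(F)$ it counts embeddings sending both into a common hyperedge, using linearity \emph{locally} --- once the image of $e_1$ is placed (at most $w^2$ ways), the hyperedge through it is unique, so the image of $e_2$ has fewer than $\binom{r}{2}$ placements --- giving $O(w^{\abs{V(F)}-1})$ bad maps against $\Omega(w^{\abs{V(F)}})$ maps overall. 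You instead observe that any collision forces three images of $\psi$ into a single hyperedge, and bound triple-collisions using linearity \emph{globally}, via $\sum_h\binom{\abs{S(h)}}{2}\le\binom{\chi w}{2}$, so that the expected number of offending hyperedges under a random colour-respecting injection is $O(1/w)$. Both counts are valid and of similar difficulty; the paper's local use of linearity is slightly more direct (no triple-collision reduction and no degree-sum bound needed), while your version isolates cleanly the single place where linearity enters and would adapt more readily to hypergraphs that are only approximately linear, i.e.\ ones satisfying a bound of the form $\sum_h\binom{\abs{S(h)}}{2}=O(w^2)$, where the paper's uniqueness step breaks down. (One cosmetic point: your implied constant in that bound depends on $\chi(F)$ as well as $r$, but since $w$ is allowed to depend on both $F$ and $r$, this is harmless.)
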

\begin{proof}
Let $U$ be the vertices of a $w$-blowup of $K_{\chi(F)}$ in $\Gamma(\h)$, and let $U_1,\ldots,\allowbreak U_{\chi(F)}$ be its vertex classes. Let $v_1,\ldots,v_{\abs{V(F)}}$ be the vertices of $F$, and fix a proper coloring $c:V(F)\to\{1,\ldots,\chi(F)\}$. Consider a map $\psi:V(F)\to U$ such that $\forall i:\psi(v_i)\in U_{c(v_i)}$. For every edge $v_i v_j \in E(F)$, $c(v_i)\neq c(v_j)$, so $\psi(v_i)\psi(v_j)$ is an edge of $\Gamma(\h)$. If the hyperedges of $\h$ containing the edges $\psi(v_i)\psi(v_j)$ are distinct, then they form a Berge-$F$. 
We prove that $\h$ contains a Berge-$F$ by estimating the number of maps $\psi:V(F)\to U$ such that $\forall i:\psi(v_i)\in U_{c(v_i)}$, and upper bounding the number of such maps that do not yield a Berge-$F$.

There are more than $(w-\abs{V(F)})^{\abs{V(F)}}=\Omega(w^{\abs{V(F)}})$ such maps in total. Indeed, we can choose the image of $v_1,v_2,\ldots\in V(F)$ one after the other. For each vertex $v_i$, $\abs{U_{c(v_i)}}=w$, out of which at most $i-1$ vertices may already be taken, so we have more than $(w-\abs{V(F)}$ choices.

Now fix two edges $e_1,e_2\in E(F)$. We upper bound the number of maps such that the images of $e_1$ and $e_2$ are contained in the same hyperedge. There are $w^2$ ways to choose the images of the endpoints of $e_1=v_i v_j$ in $\Gamma(\h)$, since they have to be in $U_{c(v_i)}$ and $U_{c(v_j)}$ respectively. Because $\h$ is linear, there is only one hyperedge containing the image of $e_1$, so there are less than $\binom{r}{2}$ ways to choose the image of the endpoints of $e_2$ in $\Gamma(\h)$ so that it is contained in the same hyperedge as $e_1$. For the image of the remaining vertices $V(F)\setminus(e_1\cup e_2)$ we have less than $w^{\abs{V(F)}-3}$ or $w^{\abs{V(F)}-4}$ choices ($e_1\cup e_2$ contains 3 or 4 vertices depending on whether $e_1$ and $e_2$ share a vertex).

In total, considering all pairs $e_1,e_2\in F$, we have less than $\binom{\abs{E(F)}}{2}\binom{r}{2}w^2 w^{\abs{V(F)}-3}=O(w^{\abs{V(F)}-1})$ maps $\psi$ which do not yield a Berge-$F$. So for large enough $w$, $\h$ must contain a Berge-$F$.
\end{proof}

By Proposition~\ref{Alon_Shikhelman}, if a graph contains $\Omega(n^{\chi(F)})$ copies of $K_{\chi(F)}$, then (for large enough $n$) it contains any graph of chromatic number $\chi(F)$, including an arbitrarily large (constant) blowup of $K_{\chi(F)}$. Therefore, by Lemma~\ref{blowup}, assuming that $\h$ does not contain a Berge-$F$, $\Gamma(\h)$ contains only $o(n^{\chi(F)})$ copies of $K_{\chi(F)}$.

By the graph removal lemma, there is a set $S$ of $o(n^2)$ edges in $\Gamma(\h)$ such that every copy of $K_{\chi(F)}$ in $\Gamma(\h)$ contains at least one edge from $S$. Since $r\ge\chi(F)$, the 2-shadow of every hyperedge contains a $K_{\chi(F)}$, and therefore it contains an edge from $S$. Since $\h$ is a linear hypergraph, every edge in $\Gamma(\h)$ is contained by only one hyperedge, so $\abs{E(\h)}\le\abs{S}=o(n^2)$.

\section{Lower bound --- Proof of Theorems~\ref{construction} and~\ref{general}}\label{constructionproof}
\begin{defn}\label{defn:blowup}
Given a $k$-uniform hypergraph $\h$ on a vertex set $V$, a blowup of $\h$ by a factor of $w$ is a $kw$-uniform hypergraph $\h'$ obtained by replacing each vertex $u_i$ of $\h$ by $w$ vertices $v_{i,1},\ldots,v_{i,w}$; the hyperedges of the new hypergraph are $\left\{\{v_{i,j}:u_i \in e, j=1,\ldots,w\}:e\in\h\right\}$. We say that the vertices $v_{i,1},\ldots,v_{i,w}$ of $\h'$ \emph{originate from} the vertex $u_i$ of $\h$, and the hyperedge $\{v_{i,j}:u_i \in e, j=1,\ldots,w\}$ originates from the hyperedge $e$ of $\h$. We may also blow up the vertices of $\h$ by different factors, replacing a vertex $u_i\in V$ with $w(u_i)$ vertices ($w(u_i)\ge1$). If $\sum_{u\in e}w(u)=r$ for every $e\in\h$, then the new hypergraph is $r$-uniform.
%Change notation? It's confusing that H is the original graph here, while below L is the original graph, and H is the blowup. But L stands for linear, so maybe it shouldn't be L either in a general defn.
%Also, I found using double indices throughout the section really ugly, so I just defined ``originates from'' here. Again, the notation might be confusing as we use double indices for v's in the defn, but single indices below (and define them differently in the two proofs).
\end{defn}
Note that this blowup definition is not analogous with the graph blowup definition used in Section~\ref{linearsharpness}. %Just realized this. Is the term blowup commonly used for both definitions for hypergraphs? One version is the one here, and another one would be one that doesn't change the uniformity, but replaces each hyperedge with many hyperedges.
%Abhi: the term blowup usually means the second; the first version is rather uncommon I think. It would be nice to call this or the graph blowup something else? It causes confusion to use two different meanings in the same paper.

%Theorem~\ref{general} is a generalization of Theorem~\ref{construction}. 
To motivate the reader, we first show the (simpler) proof of Theorem~\ref{construction}, and then generalize it to prove Theorem~\ref{general}. 

\begin{boldproof}[Proof of Theorem~\ref{construction}]Let $\omega(F)=s$. Assume for simplicity that $s-1$ divides $n$ (if it does not, then take the construction below on $(s-1)\bigl\lfloor\frac{n}{s-1}\bigr\rfloor$ vertices, and supplement it with a few isolated vertices). We construct an $(s-1)^2$-uniform hypergraph on $n$ vertices which does not contain a Berge-$K_s$. Since $K_s$ is a subgraph of $F$, it is easy to see that it does not contain a Berge-$F$ either. By Theorem~\ref{exact}, we have a linear $(s-1)$-uniform hypergraph $\LL$ on $\bigl\lfloor\frac{n}{s-1}\bigr\rfloor$ vertices with $\Omega(n^2)$ hyperedges that does not contain a Berge-$K_s$. Let $\h$ be the $(s-1)^2$-uniform hypergraph obtained by blowing up $\LL$ by a factor of $s-1$. $\h$ has the same number of hyperedges as $\LL$.

Assume by contradiction that $\h$ has a Berge-$K_s$. Then there is an $s$-clique in the $2$-shadow graph $\Gamma(\h)$. Let $v_1,\ldots,v_s$ be the vertices of an $s$-clique in $\Gamma(\h)$ which corresponds to a Berge-$K_s$ in $\h$. Let $u_i$ be the vertex of $\LL$ that $v_i$ originates from. Because the blow-up factor is $s-1$, it is impossible for all $u_i$'s to be the same vertex. It is also impossible for all $u_i$'s to be different, since then the Berge-$K_s$ in $\h$ would correspond to a Berge-$K_s$ in $\LL$. %Explain?
Thus we have $u_i=u_j\ne u_k$ for some $i\ne j\ne k$. But since $\LL$ is linear, there is at most one hyperedge in $\LL$ containing $u_i=u_j$ and $u_k$, so there are no distinct hyperedges in $\h$ containing the edges $v_i v_k$ and $v_j v_k$, contradicting that those vertices are part of a Berge-$K_s$ in $\h$.

Using the construction in Section~\ref{linearconstruction}, we can construct $r$-uniform hypergraphs for $r<(s-1)^2$ similarly. Let the sets $V_i$ be defined as in Section~\ref{linearconstruction}. If $r>s-1$, then blow up each vertex in $V_i$ by the same factor $w_i$, where $1 \le w_i \le s-1$, in such a way that  $\sum_i w_i = r$. If $r\le s-1$, then just take an $r$-uniform linear hypergraph with no Berge-$K_s$.
\end{boldproof}

\begin{boldproof}[Proof of Theorem~\ref{general}]
For any $r$ between 2 and $(c-1)t$, we construct an $r$-uniform hypergraph on $n$ vertices with $\Omega(n^2)$ hyperedges and no Berge-$F$. Since putting each vertex of $F$ in a separate set is a $t$-admissible partition, $c\le\chi(F)$. If $r<c$, just take an $r$-uniform linear hypergraph with no Berge-$F$ (given by Section~\ref{linearconstruction}). Otherwise let $\LL$ be the linear $(c-1)$-uniform hypergraph with $\Omega(n^2)$ hyperedges given by Section~\ref{linearconstruction} with $c$ in the place of $r$. $\LL$ does not contain any Berge-$G$ with $\chi(G)\ge c$. Now fix blow-up factors $w_1,\ldots,w_{c-1}$ between 1 and $t$ such that $\sum w_i = r$, and let $\h$ be a blow-up of $\LL$ obtained by blowing up every vertex in $V_i$ by $w_i$, for all $i$. $\h$ is $r$-uniform, and it has $\Omega(n^2)$ hyperedges.

Let $\abs{V(F)}=s$. Assume 
%for a contradiction 
%Here there are two nested indirect proofs, I didn't want to write this both times.
that $\h$ contains a Berge-$F$. Let $v_1,\ldots,v_s$ be the vertices of $F$, and let $\psi$ be the bijection that maps the vertices of $F$ to the vertices of the Berge-$F$ in $\h$ (as in Definition~\ref{Berge}). Let $\tilde\psi(v_i)$ be the vertex of $\LL$ from which $\psi(v_i)$ originates. Now partition the vertices of $F$ with $v_i$ and $v_j$ belonging to the same set if $\tilde\psi(v_i)=\tilde\psi(v_j)$. We claim that this is a $t$-admissible partition. Indeed, first notice that at most $t$ vertices of $\h$ originate from any vertex of $\LL$ because the blow-up factors were taken between 1 and $t$. So the size of each set in the partition is at most $t$. Now assume for a contradiction that there are two different edges of $F$, $v_iv_j$ and $v_kv_l$,  between some two sets of the partition. In other words, let $\tilde\psi(v_i)=\tilde\psi(v_l)$, and $\tilde\psi(v_j)=\tilde\psi(v_l)$. But $\LL$ is linear, so there is at most one hyperedge containing both $\tilde\psi(v_i)$ and $\tilde\psi(v_j)$, so there are no distinct hyperedges in $\h$ containing the edges $\psi(v_i)\psi(v_j)$ and $\psi(v_k)\psi(v_l)$ of $\Gamma(\h)$, contradicting the assumption that $\psi$ maps to a Berge-$F$ in $\h$.

So we have a $t$-admissible partition of $F$. Let $G$ be the graph obtained by contracting each set in the partition. $G$ is isomorphic to the graph $\tilde G$ with vertex set $\bigl\{\tilde\psi(v):v\in V(F)\bigr\}$ and edge set $\bigl\{\tilde\psi(v_i)\tilde\psi(v_j):v_iv_j\in E(F),\tilde\psi(v_i)\ne\tilde\psi(v_j)\bigr\}$. All the edges $\psi(v_i)\psi(v_j)$ of $\Gamma(\h)$ corresponding to edges $v_iv_j$ of $F$ are contained in distinct hyperedges of $\h$, since $\psi$ maps to a Berge-$F$. Since the hyperedges of $\h$ originate from distinct hyperedges of $\LL$, all edges $\tilde\psi(v_i)\tilde\psi(v_j)$ of $\tilde G$ are contained in distinct hyperedges of $\LL$, so $\LL$ contains a Berge-$G$. But $\chi(G)\ge c$, contradicting the assumption that $\LL$ does not contain any Berge-$G$ with $\chi(G)\ge c$.
\end{boldproof}

We show two corollaries of Theorem~\ref{general}. The following is a simple observation that helps in reasoning about admissible partitions (already alluded to after the statement of Theorem~\ref{general}).
\begin{obs}\label{admissibleobs}
If two vertices $v$ and $w$ are in the same set $A$ of a $t$-admissible partition of $F$, and a third vortex $u$ is connected to both of them, then $u\in A$: if it was in a different set $B$, then $uv$ and $uw$ would be two edges between $A$ and $B$, which is not allowed in a $t$-admissible partition.
\end{obs}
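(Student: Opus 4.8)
The plan is to argue by contradiction, reading off everything directly from the definition of a $t$-admissible partition (a partition of $V(F)$ into sets of size at most $t$ such that between any two sets there is at most one edge of $F$). In fact the statement amounts to little more than an unpacking of this definition, so the argument will be very short and no auxiliary machinery is needed.

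First I would suppose, for contradiction, that $u\notin A$. Since the parts of a partition are disjoint and cover $V(F)$, this forces $u$ into some single other part $B$ of the partition with $B\ne A$. By hypothesis $u$ is adjacent to both $v$ and $w$, so $uv$ and $uw$ are both edges of $F$; and since $v,w\in A$ while $u\in B$, each of these two edges runs between the distinct parts $A$ and $B$. Because $v\ne w$ (they are two distinct vertices of $A$), the edges $uv$ and $uw$ are themselves distinct. Hence we have exhibited two distinct edges of $F$ between $A$ and $B$, which contradicts the defining property that at most one edge of $F$ lies between any two parts of a $t$-admissible partition. We therefore conclude $u\in A$.

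There is essentially no obstacle here; the only two points that deserve a moment's care are that $u\notin A$ places $u$ into \emph{one} other part $B$ (so that both edges share the same unordered pair of endpoint-parts $\{A,B\}$, and the ``at most one edge between two sets'' clause genuinely applies), and that $v\ne w$ is what guarantees the two edges are genuinely different rather than a single edge counted twice. Both are immediate, so the proof is a direct one-line contradiction.
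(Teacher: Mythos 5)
Your proof is correct and is essentially identical to the paper's own argument, which is stated inline in the observation itself: placing $u$ in a different part $B$ would produce the two distinct edges $uv$ and $uw$ between $A$ and $B$, contradicting the definition of a $t$-admissible partition. Your added remarks (that $u$ lands in a single part $B$ and that $v\ne w$ makes the edges distinct) are just minor elaborations of the same one-line contradiction.
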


In the following corollaries, a blowup $F$ of $K_s$ is a blow up of $K_s$  in the usual graph sense, where each vertex $v_i \in V(K_s)$ may be blown up by a different factor $w_i$. 
%Attempting to make the statements look simpler and less cluttered.
\begin{cor}
Let $s\ge3$, and let $F$ be an arbitrary blowup of $K_s$. Then $\thres(F)\ge(s-1)(\abs{V(F)}-1)+1$.
\end{cor}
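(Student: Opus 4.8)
The plan is to apply Theorem~\ref{general} with the extreme value $t=\abs{V(F)}-1$. For this $t$ I will show that the \emph{only} $t$-admissible partition of $F$ is the partition into singletons; since contracting singletons returns $F$ unchanged, the minimum chromatic number $c$ in Theorem~\ref{general} is then exactly $\chi(F)$. Because $F$ is a blowup of $K_s$, its parts form the classes of a proper $s$-coloring and a transversal of the parts is an $s$-clique, so $\chi(F)=\omega(F)=s\ge3$. Hence $c=s\ge3$ and Theorem~\ref{general} yields $\thres(F)\ge(c-1)t+1=(s-1)(\abs{V(F)}-1)+1$, which is the claimed bound.

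The heart of the argument, and the step I expect to be the only delicate one, is proving that for $t=\abs{V(F)}-1$ no $t$-admissible partition can contain a set of size at least $2$. Write $P_1,\dots,P_s$ for the blowup classes, so that two vertices of $F$ are adjacent exactly when they lie in different parts. Suppose some set $A$ of the partition has $\abs{A}\ge2$; I will derive $A=V(F)$, contradicting $\abs{A}\le t=\abs{V(F)}-1$. The tool throughout is Observation~\ref{admissibleobs}: any vertex adjacent to two vertices already in $A$ must itself lie in $A$.

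First I show $A$ must contain two vertices from \emph{different} parts. If the two known vertices of $A$ instead lie in a common part $P_i$, then every vertex outside $P_i$ is adjacent to both of them and hence joins $A$; since $s\ge3$ there are vertices of at least two distinct parts outside $P_i$, so $A$ indeed acquires two vertices from different parts. Now let $x\in P_i$ and $y\in P_j$ with $i\ne j$ both lie in $A$. Every vertex lying in a part other than $P_i$ and $P_j$ is adjacent to both $x$ and $y$ and so joins $A$; as $s\ge3$, at least one such vertex $z$ exists (and all such parts are thereby absorbed into $A$). Finally, each $u\in P_i$ is adjacent to $y$ and to $z$, both in $A$, so $u\in A$, giving $P_i\subseteq A$; symmetrically (using $x$ and $z$) $P_j\subseteq A$. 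Since all remaining parts already lie in $A$, we conclude $A=V(F)$, the desired contradiction.

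It remains only to record the routine facts that complete the reduction: the singleton partition is genuinely $t$-admissible (each set has size $1\le t$, and any two vertices span at most one edge), and contracting all singletons leaves $F$ unchanged, so $c=\chi(F)=s$. Everything beyond the propagation argument of the previous paragraph is then a direct invocation of Theorem~\ref{general} together with the identity $\chi(F)=s$ for a blowup of $K_s$.
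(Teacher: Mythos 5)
Your proposal is correct and follows essentially the same route as the paper's own proof: apply Theorem~\ref{general} with $t=\abs{V(F)}-1$, and use Observation~\ref{admissibleobs} to propagate membership in any partition set of size at least two until it swallows all of $V(F)$, showing the only $t$-admissible partition is into singletons, whence $c=\chi(F)=s$. Your explicit split into the same-part and different-part cases is just a slightly more detailed write-up of the paper's single combined argument.
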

\begin{proof}
Let $t=\abs{V(F)}-1$. Let $V(F) = V_1\cup\ldots\cup V_s$, and let $V_i=\{v_{i,1},\ldots,v_{i,w(i)}\}$ where $v_{i,j}$ and $v_{k,l}$ are adjacent in $F$ if $i\ne k$.

We claim that the only $t$-admissible partition of $F$ is into singletons. Assuming this claim, there are no contractions to be made, so we have $c=\chi(F)=s$, proving the corollary.

Assume that $v_{i,j}$ and $v_{k,l}$ are in the same set $A$ in a $t$-admissible partition (where $i$ and $k$ may be different or equal). For every $p\notin\{i,k\}$ and $q\in{1,\ldots,w(p)}$, $v_{p,q}$ must be in $A$ by Observation~\ref{admissibleobs}, since $v_{p,q}$ is connected to both $v_{i,j}$ and $v_{k,l}$. Now $A$ contains at least one vertex from every $V_i$, and by choosing appropriate pairs of vertices, it is easy to see that the remaining vertices must be in $A$ too. But $t<\abs{V(F)}$, so putting all vertices in the same set is not a $t$-admissible partition.
\end{proof}

\begin{cor}
Let $G$ be a connected graph on the vertex set $u_1,\ldots,u_s$, and let $F$ be a sufficiently large blowup of $G$: Let $V_i=\{v_{i,1},\ldots,v_{i,w(i)}\}$ with $w(i)\ge i$ and $w(1)\ge2$, and let $V(F) = V_1\cup\ldots\cup V_s$, where $v_{i,j}$ and $v_{k,l}$ are connected if $u_iu_k\in E(G)$. Then $\thres(G)\ge(\chi(F)-1)(\abs{V(F)}-1)+1$.
\end{cor}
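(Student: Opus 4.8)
The plan is to apply Theorem~\ref{general} to $F$ with $t=\abs{V(F)}-1$ and to show that the associated quantity $c$ equals $\chi(F)$. Since $F$ is a blow-up of $G$ we have $\chi(F)=\chi(G)$, and assuming $\chi(G)\ge 3$ (needed for the hypothesis $c\ge 3$ of Theorem~\ref{general}, just as $s\ge 3$ was assumed in the previous corollary), this gives $\thres(F)\ge (c-1)t+1=(\chi(F)-1)(\abs{V(F)}-1)+1$. I emphasize that the inequality must be read as a lower bound on $\thres(F)$, not $\thres(G)$: choosing one vertex from each class exhibits $G$ as an induced subgraph of $F$, so $\thres(G)\le\thres(F)$, while $\thres(G)\le R(G,G\setminus e)$ is a constant by Theorem~\ref{ramseybound}; as $(\chi(F)-1)(\abs{V(F)}-1)+1$ is unbounded in the blow-up factors (take e.g.\ $G=C_5$), no homomorphism- or blow-up-invariance of $\thres$ can transport the bound back to $G$. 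Thus the statement is the natural extension of the preceding corollary from $K_s$ to an arbitrary connected base graph $G$.

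Reduction. The singleton partition is $t$-admissible and contracts to $F$, so $c\le\chi(F)$; it remains to show that contracting any $t$-admissible partition $P$ of $F$ yields a graph of chromatic number at least $\chi(F)$. Here the situation genuinely differs from the $K_s$ case: $P$ need not be the singleton partition — in a blow-up of $C_5$ one may take a set consisting of two adjacent representatives and leave all other vertices as singletons, and this is $t$-admissible. I therefore do not try to prove $P$ trivial. Instead I will find a copy of $G$ inside $F$ whose $s$ vertices lie in $s$ distinct sets of $P$. Contracting $P$ keeps these vertices distinct and keeps each edge of $G$ (which is an edge of $F$ running between two of the chosen sets), so the contraction contains $G$ as a subgraph and hence has chromatic number at least $\chi(G)=\chi(F)$.

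Finding the spread-out copy. Two observations suffice. First, no set of $P$ contains two vertices of one class $V_i$: every class has at least two vertices (this is the only use of $w(1)\ge 2$, together with $w(i)\ge i\ge 2$ for $i\ge 2$), and every vertex of a class adjacent to $V_i$ is joined to both of these two vertices, so by Observation~\ref{admissibleobs} each neighbouring class would be absorbed into the set; connectivity of $G$ then forces the set to be all of $V(F)$, contradicting its size being at most $t=\abs{V(F)}-1$. Consequently the $w(i)$ vertices of $V_i$ occupy $w(i)$ distinct sets of $P$. Second, I choose representatives greedily in the order $i=1,\dots,s$: after $r_1,\dots,r_{i-1}$ have been placed in at most $i-1$ distinct sets, the class $V_i$ still meets $w(i)\ge i$ distinct sets, so one of its vertices lies in a set not yet used — take it as $r_i$. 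The $r_1,\dots,r_s$ then lie in pairwise distinct sets and induce a copy of $G$, which completes the proof that $\chi$ of the contraction is at least $\chi(F)$.

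I expect the main obstacle to be exactly what rules out the short argument of the $K_s$ corollary: $t$-admissible partitions of $F$ are genuinely non-trivial, so one must show the blow-up is large enough for a copy of $G$ to survive an \emph{arbitrary} contraction, and the greedy selection pins this down to the precise hypothesis $w(i)\ge i$ (with $w(1)\ge 2$ securing the ``no two vertices of a class'' step). A secondary, conceptual obstacle — worth flagging explicitly — is that the bound is genuinely a statement about $\thres(F)$ rather than $\thres(G)$, for the reasons given in the first paragraph.
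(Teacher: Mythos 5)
Your proof matches the paper's own argument essentially step for step: the same key claim that no set of a $t$-admissible partition contains two vertices of one class $V_i$ (established by the same propagation via Observation~\ref{admissibleobs} together with connectivity of $G$), the same greedy choice of representatives using $w(i)\ge i$, and the same application of Theorem~\ref{general} with $c=\chi(G)=\chi(F)$ and $t=\abs{V(F)}-1$. Your two side remarks are also correct: the conclusion as printed should indeed read $\thres(F)$ rather than $\thres(G)$ (the paper's proof in fact bounds $\thres(F)$, and $\thres(G)\le R(G,G\setminus e)$ is a constant while the right-hand side grows with the blow-up factors), and the hypothesis $c\ge3$, i.e.\ $\chi(G)\ge3$, is implicitly required just as $s\ge3$ is in the preceding corollary.
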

\begin{proof}
Let $t=\abs{V(F)}-1$. We claim that in any $t$-admissible partition of $F$, no two vertices in the same $V_i$ belong to the same set. First we prove the corollary assuming this claim is true. Given a $t$-admissible partition of $F$, we select vertices $v_{1,j_1},\ldots,v_{s,j_s}$ one-by-one such that they belong to different sets in the partition. In step $i$, we have at least $i$ choices of a vertex in $V_i$, out of which at most $i-1$ may belong to the partition that contains a vertex selected earlier, so we can choose the vertex $v_{i,j_i}$ greedily from the rest. The vertices $v_{1,j_1},\ldots,v_{s,j_s}$ induce a copy of $G$ in $F$. Since they belong to different sets, the graph obtained by contracting the sets will still contain a copy of $G$, so its chromatic number is $\chi(G)$. This holds for all $t$-admissible partitions of $F$, so $c=\chi(G)$, proving the corollary.

To prove the claim, assume that $v_{i,j}$ and $v_{i,l}$ are in the same set $A$ in a $t$-admissible partition of $F$. Let $u_p$ be a neighbor of $u_i$ in $G$. Then every $v_{p,q}$ is connected to both $v_{i,j}$ and $v_{i,l}$, so $V_p\subset A$ by Observation~\ref{admissibleobs}. Since $G$ is connected, and $\abs{V_i}=w(i)\ge2$ for every $i$, it follows that every $V_i$ is a subset of $A$. But then $\abs{A}=\abs{V(F)}>t$, contradicting that the partition is $t$-admissible.
\end{proof}

\section*{Acknowledgements}\phantomsection\addcontentsline{toc}{section}{Acknowledgements}
The research of the second and third author was partially supported by the National Research, Development and Innovation Office NKFIH under the grant K116769. 

\phantomsection\addcontentsline{toc}{section}{References}


\begin{thebibliography}{9}

\bibitem{Alon_S}
N. Alon and C. Shikhelman. Many $T$ copies in $H$-free graphs. Journal of Combinatorial Theory, Series B 121 (2016): 146-172.

\bibitem{Erdos_Frankl_Rodl}
P. Erd\H os, P. Frankl and V. R\"odl. Graphs and Combinatorics (1986) 2: 113. doi:10.1007/BF01788085

\bibitem{Gerbner_Palmer}
D. Gerbner and C. Palmer. Extremal results for Berge-hypergraphs. SIAM Journal on Discrete Mathematics, 31(4): 2314--2327 (2015). doi:10.1137/16M1066191

\bibitem{G_P2} D. Gerbner and C. Palmer. On the number of copies of subgraphs in $F$-free graphs. \textit{manuscript}.

\bibitem{Gyori_triangle}D. Gy\H ori. Triangle-Free Hypergraphs. Combinatorics, Probability and Computing, 15 (1-2): 185--191 (2006). doi:10.1017/S0963548305007108

\bibitem{Gyori_Lemons}E. Gy\H ori and N. Lemons. 3-uniform hypergraphs avoiding a given odd cycle. Combinatorica, 32: 187 (2012). doi:10.1007/s00493-012-2584-4

\bibitem{GyKaLe}
E. Gy\H ori, G.Y. Katona and Nathan Lemons.
Hypergraph extensions of the {E}rd{\H{o}}s-{G}allai theorem.
Electronic Notes in Discrete Mathematics, 36: 655--662 (2010).

\bibitem{Palmer}
C. Palmer,  M. Tait, C. Timmons, and A.Z. Wagner. Tur\'an numbers for Berge-hypergraphs and related extremal problems. arXiv preprint arXiv:1706.04249 (2017).

\bibitem{Timmons}
C.  Timmons. On $r$-uniform linear hypergraphs with no Berge-$ K_ {2, t} $. Electronic Journal of Combinatorics, 24(4) P4.34 (2017).

\end{thebibliography}
\end{document}